\numberwithin{equation}{section}
\numberwithin{equation}{section}
\newtheorem{thm}{Theorem}[section]
 \newtheorem{cor}[thm]{Corollary}
 \newtheorem{prop}[thm]{Proposition}
 \newtheorem{defn}[thm]{Definition}
 \newtheorem{ex}[thm]{Example}
\def\vtr{\vartriangleright}
\def\vtl{\vartriangleleft}
\begin{document}
\title[Classification of three dimensional anti-dendriform algebras]{Classification of three dimensional anti-dendriform algebras}
		
\author{Abdurasulov K., Adashev J., Normatov Z.,  Solijonova Sh.}

\address[Kobiljon Abdurasulov]{
Universidade da Beira Interior, R. Marqu\^{e}s de \'{A}vila e Bolama, Covilh\~{a}, 6201-001,  Portugal; Institute of Mathematics, Uzbekistan Academy of Sciences, Univesity Street, 9, Olmazor district, Tashkent, 100174, Uzbekistan}
\email{abdurasulov0505@mail.ru}
\address[Jobir Adashev]{
Institute of Mathematics, Uzbekistan Academy of Sciences, Univesity Street, 9, Olmazor district, Tashkent, 100174, Uzbekistan		
}
\email{adashevjq@mail.ru}
\address[Zafar Normatov]{Institute of Mathematics, Uzbekistan Academy of Sciences, Univesity Street, 9, Olmazor district, Tashkent, 100174, Uzbekistan}
\email{z.normatov@mathinst.uz}
\address[Shokhsanam Solijonova]{	
National University of Uzbekistan, Univesity Street, 4, Olmazor district, Tashkent, 100174, Uzbekistan}
\email{sh.solijonova@mail.ru}
	
\begin{abstract} This article is devoted to the classification of anti-dendriform algebras that are associated with associativity. They are characterized as algebras with two operations whose sum is associative. In particular, the paper is devoted to classifying anti-dendriform algebras associated with null-filiform associative algebras and three-dimensional algebras. It is known that any finite-dimensional associative algebra without a non-zero idempotent element is nilpotent. If an associative algebra has a non-zero idempotent element, then there does not exist a compatible anti-dendriform algebra structure associated with associative algebras.
\end{abstract}

\subjclass[2020]{16P10, 17A30. }
\keywords{associative algebra; anti-dendriform algebra.}
	
	\maketitle

\section{Introduction}
\

The idea is to start with two distinct operations for the product $xy$ and the product $yx$ so that the bracket is not necessarily skew-symmetric anymore. Explicitly, we define an associative dialgebra as a vector space equipped with two associative operations \cite{Ch,Fra,U}. In \cite{bai20} introduced a dual notion of the Poisson algebra by exchanging the roles of the two binary operations in the Leibniz rule defining the Poisson algebra. Moreover, it was shown that the transposed Poisson algebra thus defined not only shares common properties of the Poisson algebra, including the closure under taking tensor products and the Koszul self-duality as an operad, but also admits a rich class of identities. In paper \cite{GP} authors defined a dendriform di- or trialgebra in an arbitrary variety Var of binary algebras (associative, commutative, Poisson, etc.) and proved that every dendriform dialgebra can be embedded into a Rota--Baxter algebra of weight zero in the same variety, and every dendriform trialgebra can be embedded into a Rota--Baxter algebra of nonzero weight. In these papers \cite{Le1,LR,RRB} also study algebras that are defined by several multiplications and require satisfying identities.

The notion of anti-dendriform algebras as a new approach of splitting the associativity is introduced in \cite{DGC}. Anti-dendriform algebras are characterized as algebras with two operations whose sum is associative and the negative left and right multiplication operators compose the bimodules of the sum associative algebras, justifying the notion due to the comparison with the corresponding characterization of dendriform algebras.

Recall that a dendriform algebra is a vector space $A$ with two bilinear operations $\succ,\prec$ satisfying
$$
x\succ(y\succ z)=(x\succ y+x\prec y)\succ z, \ (x\prec y)\prec z=x\prec(y\succ z+y\prec z),\  (x\succ y)\prec z=x\succ (y\prec z),
$$
for all $x,y,z\in A$. The notion of dendriform algebras was introduced by Loday in \cite{Cha1}. The fact that the sum of the two operations in a
dendriform algebra $(A,\succ,\prec)$ gives an associative algebra
$(A,\cdot)$ expresses a kind of ``splitting the associativity".
 Moreover, dendriform algebras are closely related to
pre-Lie algebras which are a class of Lie-admissible algebras
whose commutators are Lie algebras, also appearing in many fields in
mathematics and physics (\cite{Bai2,Bu} and the references
therein), in the sense that for a dendriform algebra
$(A,\succ,\prec)$, the bilinear operation
$$
x\ast  y=x\succ y-y\prec x,\;\;\forall x,y\in A,
$$
defines a pre-Lie algebra $(A,\ast)$, which is called the
associated pre-Lie algebra of $(A,\succ,\prec)$. Therefore there
is the following relationship among Lie algebras,
associative algebras, pre-Lie algebras and dendriform algebras in
the sense of commutative diagram of categories (\cite{Cha1}):
$$ \begin{matrix} {\rm dendriform\quad algebras} & \longrightarrow & \mbox{pre-Lie algebras} \cr \downarrow & &\downarrow\cr {\rm
associative\quad algebras} & \longrightarrow & {\rm Lie\quad
algebras.} \cr\end{matrix}$$

Note that there is an anti-structure for pre-Lie algebras, namely anti-pre-Lie algebras, introduced in \cite{LB}, which are characterized as the Lie-admissible algebras. There is a new approach to splitting operations, motivated by the study of anti-pre-Lie algebras. We introduce the notion of anti-dendriform algebras, still keeping the property of splitting the associativity, but it is the negative left and right multiplication operators that compose the bimodules of the sum associative algebras, instead of the left and right multiplication operators doing so for dendriform algebras. Such a characterization justifies the notion, and the following commutative diagram holds, which is the above diagram with replacing dendriform and pre-Lie algebras by anti-dendriform and anti-pre-Lie algebras respectively.

$$\begin{matrix} \mbox{ anti-dendriform algebras} & \longrightarrow & \mbox{anti-pre-Lie algebras} \cr \downarrow & &\downarrow\cr {\rm
associative\quad algebras} & \longrightarrow & {\rm Lie\quad
algebras} \cr\end{matrix}$$

The classification of any class of algebras is a fundamental and very difficult problem. It is one of the first problems that one encounters when trying to understand the structure of this class of algebras. This paper is devoted to classifying anti-dendriform algebras associated with null-filiform associative algebras and three-dimensional algebras. The algebraic classification (up to isomorphism) of algebras of dimension $n$ from a certain variety defined by a certain family of polynomial identities is a classic problem in the theory of non-associative algebras. There are many results related to the algebraic classification of small-dimensional algebras in many varieties of
associative and non-associative algebras \cite{ak21,afk21,BM,CKLS,fkk21,fkkv22,FKL,gkp21,ikp20,Kay}.
So, algebraic classifications of $2$-dimensional algebras \cite{petersson}, $3$-dimensional evolution algebras \cite{ccsmv}, $3$-dimensional anticommutative algebras \cite{Kobayashi,japan}, $3$-Dimensional diassociative algebras \cite{RIB} and classification of non-isomorphic complex $3$-dimensional transposed Poisson algebras \cite{BOK},  have been given.

The paper is organized as follows. In Section 2, we give some necessary notations, definitions, and preliminary results. In Section 3, we prove that there is not compatible anti-dendriform algebra structure on null-filiform associative algebras. We obtain all classifications of three-dimensional anti-dendriform algebras in Section 4.

\section{Preliminaries}

Let $A$ be a vector space with two bilinear operations
\[
\vtr: A\otimes A \rightarrow A, \qquad \vtl: A\otimes A \rightarrow A.
\]
Define a bilinear operation $\cdot$ as
\begin{equation}\label{cdot}
x \cdot y=x \vtr y+ x\vtl y, \quad \forall \ x, y \in A.
\end{equation}
If $(A,\cdot)$ is an associative algebra, then we call the triple $(A, \vtr, \vtl)$ an \textit{associative admissible algebra} and  $(A,\cdot)$ \textit{the associated associative algebra of} $(A, \vtr, \vtl)$.

\begin{defn}
Let $A$ be a vector space with two bilinear operations $\vartriangleright$ and $\vartriangleleft$. The triple $(A, \vtr, \vtl)$ is called an anti-dendriform algebra if the following equations hold:
\begin{equation}\label{anti1}
x\vtr(y\vtr z)=-(x\cdot y)\vtr z=-x\vtl (y\cdot z)=(x\vtl y)\vtl z,
\end{equation}
\begin{equation}\label{anti2}
(x\vtr y)\vtl z=x\vtr(y\vtl z), \quad \forall \ x,y,z\in A.
\end{equation}
\end{defn}

\begin{prop}[\cite{DGC}]
Let $(A, \vtr, \vtl)$ be an anti-dendriform algebra. Define a bilinear operation $\cdot$ by \eqref{cdot}.
Then $(A, \cdot)$ is an associative algebra, called
\textbf{the associated associative algebra} of $(A, \vtr, \vtl)$. Furthermore, $(A, \vtr, \vtl)$ is called
\textbf{a compatible anti-dendriform algebra structure} on $(A, \cdot)$.
\end{prop}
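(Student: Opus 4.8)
The goal is to verify that the operation $\cdot$ defined by \eqref{cdot} is associative, i.e.\ that $(x\cdot y)\cdot z = x\cdot(y\cdot z)$ for all $x,y,z\in A$. The plan is to expand both sides in terms of the two operations $\vtr$ and $\vtl$ using bilinearity and the definition of $\cdot$, and then to invoke the anti-dendriform axioms \eqref{anti1} and \eqref{anti2} to show that both expansions collapse to one and the same expression.

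The crucial input is the chain of equalities in \eqref{anti1}, which I would read off as three separate relations: $(x\cdot y)\vtr z=-x\vtr(y\vtr z)$, then $x\vtl(y\cdot z)=-x\vtr(y\vtr z)$, and finally $(x\vtl y)\vtl z=-x\vtl(y\cdot z)=x\vtr(y\vtr z)$. Together with the compatibility axiom \eqref{anti2}, namely $(x\vtr y)\vtl z=x\vtr(y\vtl z)$, these identities let me rewrite every product of products that appears after expansion.

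For the left-hand side I would write $(x\cdot y)\cdot z=(x\cdot y)\vtr z+(x\cdot y)\vtl z$ and then split the second summand by bilinearity as $(x\vtr y)\vtl z+(x\vtl y)\vtl z$. Substituting $(x\cdot y)\vtr z=-x\vtr(y\vtr z)$, $(x\vtr y)\vtl z=x\vtr(y\vtl z)$ and $(x\vtl y)\vtl z=x\vtr(y\vtr z)$, the two $x\vtr(y\vtr z)$ terms cancel and leave $(x\cdot y)\cdot z=x\vtr(y\vtl z)$. For the right-hand side I would write $x\cdot(y\cdot z)=x\vtr(y\cdot z)+x\vtl(y\cdot z)$, expand $x\vtr(y\cdot z)=x\vtr(y\vtr z)+x\vtr(y\vtl z)$ by bilinearity, and use $x\vtl(y\cdot z)=-x\vtr(y\vtr z)$; again the $x\vtr(y\vtr z)$ terms cancel and leave $x\cdot(y\cdot z)=x\vtr(y\vtl z)$. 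Since both sides equal $x\vtr(y\vtl z)$, associativity follows.

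I do not expect a genuine structural obstacle here: the argument is a direct verification. The one place demanding care is the sign bookkeeping in the chain \eqref{anti1} --- the whole argument succeeds precisely because the negative signs make the two $x\vtr(y\vtr z)$ contributions cancel on each side, so a misread sign would spuriously destroy associativity. One should also apply bilinearity consistently when splitting products such as $(x\vtr y+x\vtl y)\vtl z$ and $x\vtr(y\vtr z+y\vtl z)$, since these splittings are what make the cancelling terms appear in the first place.
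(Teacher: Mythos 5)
Your verification is correct: both expansions collapse to $x\vtr(y\vtl z)$, and the sign bookkeeping from \eqref{anti1} and the use of \eqref{anti2} are all accurate. The paper itself gives no proof of this proposition (it simply cites the reference where it originates), and your direct computation is exactly the standard argument one would expect there, so there is nothing to fault.
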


Now we divide equations \eqref{anti1} and \eqref{anti2} to some parts which we use throughout the paper:
\begin{align}\label{id1}
(x\vtr y)\vtl z=x\vtr(y\vtl z),\\\label{id2}
x\vtr (y\vtr z)=-(x\cdot y)\vtr z,\\\label{id3}
x\vtr (y\vtr z)=-x\vtl(y\cdot z),\\\label{id4}
x\vtr (y\vtr z)=(x\vtl y)\vtl z,\\\label{id5}
(x\cdot y)\vtr z=x\vtl(y\cdot z),\\\label{id6}
-(x\cdot y)\vtr z=(x\vtl y)\vtl z,\\\label{id7}
-x\vtl (y\cdot z)=(x\vtl y)\vtl z.
\end{align}

Recall that an associative algebra $(A, \cdot)$ is 2-nilpotent if $(x\cdot y)\cdot z = x\cdot (y\cdot z) = 0$ for all $x, y, z \in A$.
\begin{defn}
    An anti-dendriform algebra $(A,\vtr,\vtl)$ is 2-nilpotent if $(x\ast_{i_1} y)\ast_{i_2} z=x\ast_{i_3} (y\ast_{i_4} z)=0$ for all $i_1, i_2, i_3, i_4\in\{\vtr,\vtl\}$ and $x,y,z\in A$.
\end{defn}

Then the following conclusion is obvious.

\begin{prop}\label{2-nilpotent}
Let $(A,\vtr,\vtl)$ be an anti-dendriform algebra satisfying $x\vtr y+x\vtl y=0$ for all $x,y\in A$.  Then $(A,\vtr,\vtl)$ is 2-nilpotent algebra.
\end{prop}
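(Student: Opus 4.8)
The plan is to exploit the way the hypothesis collapses the associated associative product. First I would observe that the assumption $x\vtr y+x\vtl y=0$ is, by \eqref{cdot}, exactly the statement that the associated associative product vanishes identically, $x\cdot y=0$ for all $x,y\in A$, and that moreover $x\vtl y=-(x\vtr y)$ for all $x,y$. The zero associative algebra is trivially $2$-nilpotent; the real content is to lift this triviality from $\cdot$ to the level of the two operations $\vtr$ and $\vtl$, since $2$-nilpotency of $(A,\vtr,\vtl)$ demands that all eight triple products in the two operations vanish.

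The key reduction I would make is that, thanks to the sign relation $x\vtl y=-(x\vtr y)$, every left-normed triple product $(x\ast_{i_1}y)\ast_{i_2}z$ equals $\pm(x\vtr y)\vtr z$ and every right-normed product $x\ast_{i_3}(y\ast_{i_4}z)$ equals $\pm\,x\vtr(y\vtr z)$: each occurrence of $\vtl$ is replaced by $-\vtr$ and only contributes an overall sign. Checking the four sign choices in each case confirms that no configuration of $i_1,i_2,i_3,i_4$ escapes this two-case reduction. Hence it suffices to prove that the two ``pure'' products $x\vtr(y\vtr z)$ and $(x\vtr y)\vtr z$ both vanish.

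For the first, I would apply \eqref{id2}: since $x\cdot y=0$, we get $x\vtr(y\vtr z)=-(x\cdot y)\vtr z=0$. For the second, I would use the identity chain \eqref{anti1}, in the form \eqref{id4}, to write $(x\vtl y)\vtl z=x\vtr(y\vtr z)=0$; the sign relation then gives $(x\vtl y)\vtl z=-(x\vtr y)\vtl z$, so $(x\vtr y)\vtl z=0$, and since the associative product vanishes, $(x\vtr y)\vtr z=-(x\vtr y)\vtl z=0$. Combining the two computations with the reduction above shows that all eight triple products are zero, which is precisely $2$-nilpotency.

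I do not expect any genuine obstacle beyond bookkeeping. The only point requiring care is the two-case reduction itself: one must verify that all eight products really do reduce, up to sign, to $x\vtr(y\vtr z)$ or $(x\vtr y)\vtr z$, so that killing these two pure products disposes of every remaining case uniformly.
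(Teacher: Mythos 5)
Your proposal is correct and follows essentially the same route as the paper's proof: both exploit that the hypothesis makes the associated associative product vanish, kill the products $x\vtr(y\vtr z)$ and $(x\vtl y)\vtl z$ via the identities in \eqref{anti1}, and then propagate zero to all eight triple products using bilinearity together with the sign relation $x\vtl y=-x\vtr y$. The only cosmetic difference is that you normalize everything to pure $\vtr$-products up front and invoke \eqref{id4}, whereas the paper works with the mixed products directly and invokes \eqref{id7}.
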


\begin{proof}
Let $x,y,z\in A$. Then using Eq. \eqref{anti1}, we obtain
 $$x\vtr(y\vtr z)\stackrel{(\ref{id2})}{=}-(x\vtr y+x\vtl y)\vtr z=0=-x\vtl(y\vtr z+y\vtl y)\stackrel{(\ref{id7})}{=}(x\vtl y)\vtl z.$$

Moreover, we have \begin{equation}\label{eq1}
x\vtr(y\vtr z)=(x\vtl y)\vtl z=0.
\end{equation}
By Eq. (\ref{eq1}) and Eq. (\ref{anti2}), we have
\begin{equation}\label{eq2}
(x\vtr y)\vtl z=(x\vtr y)\vtl z+(x\vtl y)\vtl z=(x\vtr y+x\vtl y)\vtl z=0,
\end{equation}
\begin{equation}\label{eq3}
x\vtr (y\vtl z)=x\vtr (y\vtr z)+x\vtr (y\vtl z)=x\vtr(y\vtr z +y\vtl z)=0.
\end{equation}
Furthermore, by Eqs.~(\ref{eq1}), (\ref{eq2}) and (\ref{eq3}),
we have
\begin{equation*}
(x\vtr y)\vtr z=(x\vtr y)\vtr z+(x\vtr y)\vtl z=0,
\end{equation*}
\begin{equation*}
(x\vtl y)\vtr z=(x\vtl y)\vtr z+(x\vtl y)\vtl z=0,
\end{equation*}
\begin{equation*}
x\vtl (y\vtr z)=x\vtr (y\vtr z)+x\vtl (y\vtr z)=0,
\end{equation*}
\begin{equation*}
x\vtl (y\vtl z)=x\vtr (y\vtl z)+x\vtl (y\vtl z)=0.
\end{equation*}
\end{proof}

The following sets are called the center of associative and anti-dendriform algebras, respectively: $${\rm
Z}_{As}(A)=\{x\in A~|~x\cdot y=y\cdot x=0,\ \forall y\in A\},$$
$${\rm
Z}_{AD}(A)=\{x\in A~|~x\vtr y=x\vtl y=y\vtr x=y\vtl x=0,\ \forall y\in A\}.$$

An \textit{ideal} $I$ of a anti-dendriform algebra $A$ is a subalgebra of the algebra $A$ that satisfies the conditions:
$$x\vtr y, \ x\vtl y, \ y\vtr x, \ y\vtl x \in I, \ \ \mbox{for any} \ \  x\in A,\  y\in I.$$
It is obvious that center of an arbitrary anti-dendriform algebra is an ideal.

\begin{prop}\label{compatible}
Let $(A,\cdot)$ be an associative algebra and let $(A, \vtr, \vtl)$ be a compatible anti-dendriform algebra structure on $(A,\cdot)$. If the center $(Z(A), \cdot)$ of $(A, \cdot)$ is the center $(Z(A), \vtr, \vtl)$ of $(A, \vtr, \vtl)$, then the quotient $(A/Z(A), \vtr, \vtl)$ is a compatible anti-dendriform algebra structure on $(A/Z(A), \cdot)$.
\end{prop}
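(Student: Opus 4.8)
The plan is to put $Z := Z(A)$ for the common value $Z_{As}(A)=Z_{AD}(A)$ and to show that each of the three operations $\cdot$, $\vtr$, $\vtl$ descends to a well-defined operation on $A/Z$ that still obeys all the defining identities. First I would record that $Z$ is an ideal for all three operations: for $\vtr$ and $\vtl$ this is precisely the fact, already noted in the excerpt, that the center $Z_{AD}(A)$ of an anti-dendriform algebra is an ideal, and for $\cdot$ it then follows from $\cdot=\vtr+\vtl$. Consequently the quotient candidates $\bar x \circ \bar y := \overline{x\circ y}$ for $\circ\in\{\cdot,\vtr,\vtl\}$ are available, and the real point is their well-definedness.

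For well-definedness, suppose $\bar x=\bar{x'}$ and $\bar y=\bar{y'}$, and write $x=x'+u$, $y=y'+v$ with $u,v\in Z$. Expanding
\[
x\vtr y = x'\vtr y' + x'\vtr v + u\vtr y' + u\vtr v,
\]
and using that every element of $Z=Z_{AD}(A)$ is annihilated by $\vtr$ from either side, the last three terms vanish, so that $x\vtr y=x'\vtr y'$ exactly; the identical computation applies to $\vtl$ and hence to $\cdot$. This is the step where the hypothesis $Z_{As}(A)=Z_{AD}(A)$ does its work. One always has $Z_{AD}(A)\subseteq Z_{As}(A)$, so the content of the hypothesis is the reverse inclusion: it guarantees that the elements being quotiented out kill each of $\vtr$ and $\vtl$ individually, not merely their sum $\cdot$.

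Once the operations are defined, the projection $\pi:A\to A/Z$ is simultaneously a homomorphism for $\cdot$, $\vtr$ and $\vtl$. Because the anti-dendriform axioms \eqref{anti1}--\eqref{anti2} and the associativity of $\cdot$ are multilinear identities, they are inherited by the homomorphic image $A/Z$; and the compatibility relation survives since $\bar x\cdot\bar y=\overline{x\cdot y}=\overline{x\vtr y+x\vtl y}=\bar x\vtr\bar y+\bar x\vtl\bar y$. Therefore $(A/Z,\vtr,\vtl)$ is an anti-dendriform algebra whose associated associative algebra is $(A/Z,\cdot)$, which is exactly the assertion that it is a compatible anti-dendriform structure on $(A/Z,\cdot)$. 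The only genuine obstacle is the well-definedness above, and the coincidence of the two centers is precisely what resolves it.
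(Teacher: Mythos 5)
Your proof is correct and follows the same route as the paper: pass to the quotient by the common center and check that the structure descends, the closing computation $\bar{x}\cdot\bar{y}=\bar{x}\vtr\bar{y}+\bar{x}\vtl\bar{y}=\overline{x\vtr y+x\vtl y}=\overline{x\cdot y}$ being exactly the one the paper records. Your write-up is in fact more complete than the paper's: the published proof consists essentially of that one computation and takes the well-definedness of $\vtr$ and $\vtl$ on $A/Z(A)$ for granted, whereas you prove it and correctly isolate it as the precise point where the hypothesis $Z_{As}(A)=Z_{AD}(A)$ (equivalently the inclusion $Z_{As}(A)\subseteq Z_{AD}(A)$, the converse being automatic) is actually needed.
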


\begin{proof}
The proposition can be described as follows
$$
\begin{CD}
(A, \vtr, \vtl) @>\quad\quad x\cdot y=x\vtr y+x\vtl y\quad\quad>> (A, \cdot)  \\
@V{\bar{x}=x+Z(A)}VV @VV{\bar{x}=x+Z(A)}V  \\
(A/Z(A), \vtr, \vtl) @>>\quad\quad\bar{x}\cdot \bar{y}=\bar{x}\vtr \bar{y}+\bar{x}\vtl \bar{y}\quad\quad> (A/Z(A), \cdot)
\end{CD}
$$
To show that $(Z(A), \vtr, \vtl)$ is a compatible anti-dendriform algebra structure on $(A/Z(A), \cdot)$, it is sufficient to show $\bar{x}\cdot \bar{y}=\overline{x\cdot y}$.  The structure is given by $\bar{x}\cdot\bar{y}=\bar{x}\vtr \bar{y}+\bar{x}\vtl\bar{y}$. Indeed,
$$
\begin{array}{lll}
\bar{x}\cdot\bar{y}&=&\bar{x}\vtr \bar{y}+\bar{x}\vtl\bar{y}\\
&=&(x+Z(A))\vtr(y+Z(A))+(x+Z(A))\vtl(y+Z(A))\\
&=&x\vtr y+x\vtl y+Z(A)=\overline{x\cdot y}.
\end{array}
$$
\end{proof}

From now we use the following notations: $As_n^q$- and $AD_n^q$- stand for $n$-dimensional $q$-th associative and anti-dendriform algebra structures associated with nilpotent associative algebras, respectively.

\begin{thm}[\cite{Kobayashi}] Let $A$ be a two-dimensional complex associative algebra. Then
it is isomorphic to one of the following pairwise non-isomorphic associative
algebras:

$As_2^1 : \ Abelian;$

$As_2^2 : \ e_1e_1 = e_1;$

$As_2^3 : \ e_1e_1= e_2;$

$As_2^4 : \ e_1e_1 = e_1, e_1e_2= e_2;$

$As_2^5 : \ e_1e_1 = e_1, e_2e_1= e_2;$

$As_2^6 : \ e_1e_1 = e_1, e_1e_2= e_2, e_2e_2= e_2;$

$As_2^7 : \ e_1e_1 = e_1, e_2e_2= e_2.$
\end{thm}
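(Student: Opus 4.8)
The plan is to organize the classification around the standard dichotomy for finite-dimensional associative algebras recalled above: a nonzero such algebra either is nilpotent or contains a nonzero idempotent. Fix a basis $\{e_1,e_2\}$ of the $2$-dimensional algebra $A$ over $\mathbb{C}$, and write $L_x,R_x$ for the left and right multiplication operators, $L_x(y)=xy$ and $R_x(y)=yx$.

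First I would treat the nilpotent case, which is short. Here $A^2\subsetneq A$, so $\dim A^2\in\{0,1\}$. If $A^2=0$ the product is identically zero and $A\cong As_2^1$. If $\dim A^2=1$, nilpotency forces $A^3=0$; taking $e_2$ to span $A^2$ and any $e_1\notin A^2$, every product containing $e_2$ lies in $A^3=0$, so only $e_1e_1$ can survive, and $e_1e_1$ is a nonzero element of $A^2$, which after rescaling gives $e_1e_1=e_2$, i.e. $As_2^3$.

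Next, for a non-nilpotent $A$ I would choose an idempotent and normalize it to $e_1$, so $e_1e_1=e_1$. Associativity makes $L_{e_1}$ and $R_{e_1}$ commuting idempotent operators, hence simultaneously diagonalizable with eigenvalues in $\{0,1\}$, giving the Peirce decomposition
\[
A=A_{11}\oplus A_{10}\oplus A_{01}\oplus A_{00},\qquad A_{ij}=\{x\in A:\ e_1x=ix,\ xe_1=jx\},
\]
with $e_1\in A_{11}$. Since $\dim A=2$, either $\dim A_{11}=2$, so that $e_1$ is a two-sided unit and $A$ is a $2$-dimensional unital (hence commutative) algebra, or $\dim A_{11}=1$ and a complementary eigenvector $e_2$ can be placed in exactly one of $A_{10},A_{01},A_{00}$. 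In the unital case I would write $e_2e_2=\alpha e_1+\beta e_2$ and complete the square $e_2\mapsto e_2-\tfrac{\beta}{2}e_1$ to reduce to $e_2e_2=\gamma e_1$; over $\mathbb{C}$ the value $\gamma$ is either $0$ or may be scaled to $1$, yielding the dual numbers and $\mathbb{C}\times\mathbb{C}$, the two unital representatives on the list. In the remaining cases, writing $e_2e_2=\alpha e_1+\beta e_2$, associativity forces $\alpha=0$ (e.g. from $e_1(e_2e_2)=(e_1e_2)e_2$ or its right-handed analogue), while in the positions $A_{10}$ and $A_{01}$ the mixed identity $(e_2e_1)e_2=e_2(e_1e_2)$ additionally forces $\beta=0$; only in $A_{00}$ does $\beta$ survive, and rescaling then normalizes it to $0$ or $1$. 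Running through $A_{00},A_{10},A_{01}$ this produces precisely $As_2^2,As_2^4,As_2^5$ and recovers $\mathbb{C}\times\mathbb{C}$ from $A_{00}$ with $\beta=1$, matching the stated forms.

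Finally I would prove the seven algebras are pairwise non-isomorphic by exhibiting invariants preserved under isomorphism: nilpotency together with $\dim A^2$ separates $As_2^1$ and $As_2^3$ from each other and from the rest; existence of a two-sided unit and semisimplicity isolate $\mathbb{C}\times\mathbb{C}$ and the dual numbers; commutativity separates $As_2^2$ from the non-commutative $As_2^4,As_2^5$; and the dimensions of the left and right annihilators $\{x:xA=0\}$ and $\{x:Ax=0\}$ distinguish $As_2^4$ from its opposite $As_2^5$. I expect the genuine difficulty to lie not in writing down candidate multiplication tables, which is mechanical, but in the bookkeeping that guarantees completeness together with irredundancy: one must impose the \emph{full} set of associativity identities---in particular the mixed identity $(e_2e_1)e_2=e_2(e_1e_2)$, which is exactly what kills the free parameter $\beta$ in the positions $A_{10}$ and $A_{01}$---so that no non-associative table is mistakenly retained, and one must check that configurations reached through different idempotent choices (for example $\mathbb{C}\times\mathbb{C}$, obtained both as a pair of orthogonal idempotents and as a unital algebra with distinct roots) collapse to a single isomorphism class. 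This completeness-and-non-repetition step is the main obstacle.
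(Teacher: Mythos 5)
The paper contains no proof of this theorem at all: it is imported verbatim from \cite{Kobayashi}, so there is no internal argument to compare yours against, and your proposal must stand on its own. On its own terms it is essentially correct and complete. The nilpotent-or-idempotent dichotomy you start from is the same fact the paper itself invokes (``any finite-dimensional associative algebra without a non-zero idempotent element is nilpotent''); your treatment of the nilpotent case is right; the Peirce decomposition $A=A_{11}\oplus A_{10}\oplus A_{01}\oplus A_{00}$ with the dimension count $\dim A_{11}\in\{1,2\}$ is the standard and exhaustive way to organize the non-nilpotent case (I checked the key computations: in $A_{10}$ the identity $(e_2e_1)e_2=e_2(e_1e_2)$ forces $e_2e_2=0$ outright, in $A_{00}$ one gets $e_2e_2=\beta e_2$ with $\beta$ normalizable to $0$ or $1$); and your invariants --- nilpotency with $\dim A^2$, unitality, semisimplicity, commutativity, and the left/right annihilator dimensions that separate $As_2^4$ from its opposite $As_2^5$ --- do distinguish all seven classes.

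There is, however, one point you assert but cannot deliver, and you should have flagged it: your unital case does \emph{not} yield ``the two unital representatives on the list.'' Your (correct) derivation produces the dual numbers, i.e.\ $e_1e_1=e_1$, $e_1e_2=e_2e_1=e_2$, $e_2e_2=0$, whereas the printed $As_2^6$ reads $e_1e_1=e_1$, $e_1e_2=e_2$, $e_2e_2=e_2$ with $e_2e_1=0$ understood --- and that table is not even associative, since $(e_2e_1)e_2=0$ while $e_2(e_1e_2)=e_2e_2=e_2$. Nor can it be repaired by also setting $e_2e_1=e_2$, since that gives $\mathbb{C}\times\mathbb{C}$ in a non-idempotent basis and would duplicate $As_2^7$, contradicting pairwise non-isomorphism. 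So the printed list carries a typo ($e_2e_2=e_2$ should read $e_2e_1=e_2$), and your completeness argument is precisely what exposes it. A careful write-up must state this discrepancy explicitly; as written, the single step where you silently identify your dual numbers with the printed $As_2^6$ is the one place your proof does not do what it claims.
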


Now we present the classification of all two-dimensional anti-dendriform algebras corresponding to these two-dimensional associative algebras.

\begin{thm}[\cite{DGC}] Let $(A, \rhd,\lhd )$ be a compatible two-dimensional complex anti-dendriform algebra structure on $(A, \cdot)$ two-dimensional complex associative algebra. Then any two-dimensional complex anti-dendriform algebra $(A, \rhd, \lhd )$ is isomorphic to one of the following mutually non-isomorphic cases:

$AD_2^1: \ Abelian;$

$AD_2^2: \ e_1\lhd e_1 = e_2;$

$AD_2^3(\lambda): \ e_1\rhd e_1 = e_2, e_1\lhd e_1= \lambda e_2, \ \lambda\in \mathbb{C}.$

\end{thm}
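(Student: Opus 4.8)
The strategy is to run the classification through the associated associative algebra: use the preceding list of the seven two-dimensional associative algebras, decide which of them can carry a compatible structure, and then impose the anti-dendriform axioms on the survivors.

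First I would rule out any associated algebra with a non-zero idempotent. Suppose $(A,\cdot)$ has $u\cdot u=u$ with $u\neq0$, and set $a=u\vtr u$ and $b=u\vtl u$, so that $a+b=u\cdot u=u$. Evaluating \eqref{id7}, \eqref{id2} and \eqref{id4} on the triple $(u,u,u)$ gives $b\vtl u=-b$, $u\vtr a=-a$ and $u\vtr a=b\vtl u$ respectively; comparing the last two shows $a=b$, which with $a+b=u$ forces $a=b=\tfrac12u$. Substituting $a=\tfrac12u$ into $u\vtr a=-a$ gives $\tfrac12(u\vtr u)=-\tfrac12u$, hence $u\vtr u=-u$, that is $a=-u$; together with $a=\tfrac12u$ this yields $\tfrac12u=-u$ and $u=0$, a contradiction. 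Since $e_1$ is a non-zero idempotent in $As_2^2,As_2^4,As_2^5,As_2^6,As_2^7$, only the nilpotent algebras $As_2^1$ and $As_2^3$ can carry a compatible anti-dendriform structure, and it suffices to classify these.

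For the abelian algebra $As_2^1$ we have $x\cdot y=0$, hence $x\vtr y=-\,x\vtl y$ for all $x,y$; by Proposition \ref{2-nilpotent} the algebra is $2$-nilpotent, so all triple $\vtl$-products vanish, and conversely a direct check of \eqref{anti1}--\eqref{anti2} shows that any bilinear $\vtl$ with vanishing triple products does give an anti-dendriform algebra with $\vtr=-\vtl$. The problem thus becomes the classification, up to change of basis, of bilinear products $\vtl$ on $\mathbb{C}^2$ satisfying $(A\vtl A)\vtl A=A\vtl(A\vtl A)=0$. Writing $W=A\vtl A$, the triple-product condition makes $W$ annihilate $A$ on both sides, which forces $\dim W\le1$: if $W=0$ we obtain $AD_2^1$, while if $\dim W=1$ a basis can be chosen so that $e_1\vtl e_1=e_2$ is the only non-zero product, giving $AD_2^3(-1)$. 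For $As_2^3$, where $e_1\cdot e_1=e_2$ and all other products vanish, I would treat the four products $e_i\vtr e_j$ as unknown vectors, set $e_i\vtl e_j=e_i\cdot e_j-e_i\vtr e_j$, and substitute into \eqref{id1}--\eqref{id7}. Since $e_2$ spans the center $Z_{As}(A)$, evaluating the identities on the basis triples reduces everything to a finite polynomial system; I expect it to kill every product other than $e_1\vtr e_1$ and $e_1\vtl e_1$ and to leave $e_1\vtr e_1+e_1\vtl e_1=e_2$. Writing $e_1\vtr e_1=p\,e_2$, the single remaining parameter is $p$: rescaling $e_2$ yields $AD_2^2$ when $p=0$ and $AD_2^3(\lambda)$ with $\lambda=(1-p)/p$ when $p\neq0$, and as $p$ runs over $\mathbb{C}\setminus\{0\}$ the value $\lambda$ runs over $\mathbb{C}\setminus\{-1\}$, the abelian case above supplying $\lambda=-1$.

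Finally I would check that the list is irredundant. Any isomorphism of anti-dendriform algebras induces one of the associated associative algebras, so on $As_2^3$ it must have the form $e_1\mapsto\alpha e_1+\beta e_2,\ e_2\mapsto\alpha^2 e_2$ with $\alpha\neq0$; a short computation shows such a map sends $e_1\vtr e_1=e_2,\ e_1\vtl e_1=\lambda e_2$ to the same relations with the same $\lambda$, so $\lambda$ is a complete invariant of $AD_2^3(\lambda)$, while $AD_2^2$ is separated from every $AD_2^3(\lambda)$ by the identical vanishing of $\vtr$. The main obstacle is the $As_2^3$ step: solving the coupled nonlinear system without overlooking a solution branch, and then confirming that no admissible change of basis can alter the invariant $\lambda$.
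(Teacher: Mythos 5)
Your proposal cannot be compared with ``the paper's own proof'' in a literal sense, because the paper contains none: this theorem is quoted from \cite{DGC}, exactly as Proposition \ref{idempotent} is. Judged against the method the paper itself uses for the three-dimensional classification, your route is the same one: exclude associative algebras with a non-zero idempotent, so that only the nilpotent algebras $As_2^1$ and $As_2^3$ remain, then impose \eqref{id1}--\eqref{id7} on each and normalize by basis changes. The parts of your argument that are carried out are correct. In particular, your evaluation of \eqref{id7}, \eqref{id2} and \eqref{id4} on the triple $(u,u,u)$ is a genuine self-contained proof of Proposition \ref{idempotent}, which the paper only cites; the abelian case via Proposition \ref{2-nilpotent} and the classification of two-dimensional $2$-nilpotent products correctly yields $AD_2^1$ and $AD_2^3(-1)$; and the invariance of $\lambda$ under the maps $e_1\mapsto\alpha e_1+\beta e_2$, $e_2\mapsto\alpha^2 e_2$ settles non-isomorphism. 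Your final tally (abelian carries $AD_2^1$ and $AD_2^3(-1)$; $As_2^3$ carries $AD_2^2$ and $AD_2^3(\lambda)$ with $\lambda\neq-1$) agrees with the remark the paper makes immediately after the theorem.

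The only genuine weakness is that the $As_2^3$ step is stated as an expectation rather than solved, and this is where a reviewer would press you. The expectation is in fact correct, and the system closes with few identities. Write $e_1\vtr e_1=a_1e_1+a_2e_2$ and $e_1\vtr e_2=b_1e_1+b_2e_2$. Identity \eqref{id5} on $(e_1,e_1,e_1)$ and $(e_1,e_1,e_2)$ gives $e_2\vtr e_1=-e_1\vtr e_2$ and $e_2\vtr e_2=0$. Then \eqref{id2} on $(e_2,e_1,e_1)$ gives $a_1b_1=a_1b_2=0$; \eqref{id2} on $(e_1,e_1,e_1)$ gives $a_1^2+a_2b_1=b_1$ and $a_1a_2+a_2b_2=b_2$; and \eqref{id4} on $(e_1,e_1,e_1)$ gives $(2a_2-1)b_1=0$ and $a_1=(1-2a_2)b_2$. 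If $a_1\neq0$ then $b_1=b_2=0$, whence $a_1=(1-2a_2)b_2=0$, a contradiction; so $a_1=0$, and then $b_1\neq0$ (resp. $b_2\neq0$) would force $a_2=1$ and $a_2=\tfrac12$ simultaneously. Hence $a_1=b_1=b_2=0$, leaving exactly $e_1\vtr e_1=p\,e_2$, $e_1\vtl e_1=(1-p)e_2$ with all other products zero, which is $AD_2^2$ for $p=0$ and $AD_2^3\bigl((1-p)/p\bigr)$ for $p\neq0$, as you predicted. With this computation inserted, your proof is complete.
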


In this theorem, it can be seen that the compatible anti-dendriform algebras $AD_2^1$ and $AD_2^3(-1)$ structure on $As_2^1$,  the anti-dendriform algebras $AD_2^2$ and $AD_2^3(\lambda_{\neq-1})$ are compatible anti-dendriform algebra structure on $As_2^3$.

\begin{prop}[\cite{DGC}]\label{idempotent}
Let $(A, \cdot)$ be an associative algebra with a non-zero idempotent $e$, that is, $e \cdot e= e$. Then there does not exist a compatible anti-dendriform algebra structure on $(A, \cdot)$.
\end{prop}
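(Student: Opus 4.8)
The plan is to argue by contradiction, extracting everything from the local data at the idempotent $e$ and never needing the rest of the algebra. Suppose a compatible anti-dendriform structure $(A,\vtr,\vtl)$ on $(A,\cdot)$ exists. The only external hypothesis is $e\cdot e=e$, which by \eqref{cdot} unpacks to $e\vtr e+e\vtl e=e$. I would abbreviate $a:=e\vtr e$ and $b:=e\vtl e$, so that the idempotency condition becomes simply $a+b=e$.

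Next I would specialize the chain of equalities \eqref{anti1} to the single input $x=y=z=e$, substituting $e\cdot e=e$ wherever it occurs. The four equal terms then read $e\vtr(e\vtr e)=e\vtr a$, then $-(e\cdot e)\vtr e=-(e\vtr e)=-a$, then $-e\vtl(e\cdot e)=-(e\vtl e)=-b$, and finally $(e\vtl e)\vtl e=b\vtl e$. Hence \eqref{anti1} forces $e\vtr a=-a=-b=b\vtl e$. Comparing the middle two terms already gives $a=b$, and together with $a+b=e$ this pins down $a=b=\tfrac12 e$, where I may divide by $2$ since the ground field is $\mathbb{C}$.

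Finally I would substitute $a=\tfrac12 e$ back into the surviving relation $e\vtr a=-a$. By bilinearity $e\vtr a=\tfrac12(e\vtr e)=\tfrac12 a=\tfrac14 e$, whereas $-a=-\tfrac12 e$; equating the two yields $\tfrac14 e=-\tfrac12 e$, i.e. $\tfrac34 e=0$, so $e=0$, contradicting the assumption that $e$ is a \emph{non-zero} idempotent. I expect no genuine obstacle here: the proof uses only \eqref{anti1} (never \eqref{anti2}) and is essentially bookkeeping. The one point requiring care is tracking the scalar coefficients correctly and noting that the divisions by $2$ and $3$ are legitimate in characteristic zero; the real content is simply the observation that the sign appearing in \eqref{anti1} is incompatible with $e$ being fixed under its own left and right products, which surfaces as soon as one makes the specialization $x=y=z=e$.
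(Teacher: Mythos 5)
Your proof is correct: specializing the chain \eqref{anti1} at $x=y=z=e$ forces $e\vtr e=e\vtl e=\tfrac12 e$, and then $e\vtr(e\vtr e)=-(e\cdot e)\vtr e$ collapses to $\tfrac14 e=-\tfrac12 e$, i.e.\ $e=0$, a contradiction. The paper itself does not reproduce a proof (it cites \cite{DGC}), and your argument is essentially the same one given in that reference, so there is nothing to add beyond noting that only \eqref{anti1}, bilinearity, and characteristic zero are needed.
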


It is known that any finite-dimensional associative algebra without a non-zero idempotent element is nilpotent. Therefore we have the following conclusion.

\begin{cor}
The associated associative algebra of any anti-dendriform algebra is nilpotent.
\end{cor}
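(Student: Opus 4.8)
The plan is to derive the corollary directly from Proposition \ref{idempotent} together with the structural fact quoted just above it, reasoning by contraposition. Let $(A, \vtr, \vtl)$ be an anti-dendriform algebra (finite-dimensional, as in the ambient setting of the paper) and let $(A, \cdot)$ be its associated associative algebra, defined by $x\cdot y = x\vtr y + x\vtl y$. By the Proposition on the associated associative algebra, $(A, \vtr, \vtl)$ is by construction a compatible anti-dendriform algebra structure on $(A, \cdot)$, so this compatible structure is available throughout.

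First I would suppose, for contradiction, that $(A, \cdot)$ is \emph{not} nilpotent. The quoted fact states that any finite-dimensional associative algebra without a non-zero idempotent element is nilpotent; taking its contrapositive, a non-nilpotent finite-dimensional associative algebra must contain a non-zero idempotent $e$ with $e\cdot e = e$. Thus $(A, \cdot)$ carries such an idempotent.

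Then I would invoke Proposition \ref{idempotent}, which asserts that an associative algebra possessing a non-zero idempotent admits no compatible anti-dendriform algebra structure at all. This directly contradicts the existence of the compatible structure $(A, \vtr, \vtl)$ on $(A, \cdot)$ established in the first paragraph. The contradiction forces $(A, \cdot)$ to be nilpotent, which is the assertion of the corollary.

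The argument is short because the two essential ingredients are already in place, so the only substantive content is the quoted structural fact, and I expect that to be the sole point requiring care. Its justification is standard: for a finite-dimensional associative algebra the Jacobson radical $J(A)$ is a nilpotent ideal, the quotient $A/J(A)$ is semisimple, and non-nilpotence of $A$ means $A/J(A)\neq 0$; a non-zero semisimple algebra contains a non-zero idempotent, which then lifts through the nil ideal $J(A)$ to a non-zero idempotent of $A$. Since this fact is assumed, no obstacle remains and the corollary follows immediately.
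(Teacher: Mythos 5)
Your proof is correct and follows exactly the paper's (implicit) argument: the corollary is stated there as an immediate consequence of Proposition \ref{idempotent} combined with the classical fact that a finite-dimensional associative algebra without a non-zero idempotent is nilpotent, which is precisely your contradiction argument. Your added justification of that classical fact via the Jacobson radical and idempotent lifting is a sound supplement, though the paper simply quotes the fact as known.
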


For an algebra $A$ of an arbitrary variety, we consider the series
$$A^1=A, \ \ A^{i+1}=\sum\limits_{k=1}^{i}A^kA^{i+1-k}, \ \ i\geq1.$$

We say that an algebra $A$ is \textbf{nilpotent} if $A^i =\{0\}$ for some $i\in \mathbb{N}.$ The smallest integer satisfying $A^i = \{0\}$
is called the index of nilpotency of $A.$

According to this result, in order to classify arbitrary three-dimensional anti-dendriform algebras over a complex number field, we present a classification of all complex three-dimensional nilpotent associative algebras.

\begin{thm}[\cite{Kobayashi}]\label{assdim3}
Any three-dimensional complex  nilpotent associative algebra is
isomorphic to one of the following pairwise non-isomorphic
associative algebras:

$As_3^1:\ Abelian;$

$As_3^2:\ e_1e_2=e_3, \ e_2e_1=-e_3;$

$As_3^3:\ e_1e_1=e_3;$

$As_3^4:\ e_1e_2=e_3;$

$As_3^5(\lambda):\ e_1e_1=e_3, \ e_1e_2=\lambda e_3, \ e_2e_2=e_3, \ \lambda\in \mathbb{C};$

$As_3^6:\ e_1e_1=e_2, \ e_1e_2=e_3, \ e_2e_1=e_3.$

\end{thm}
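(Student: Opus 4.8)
The plan is to classify by the dimension of the derived square $A^2=A\cdot A$. Since $A$ is nilpotent, the series $A\supsetneq A^2\supsetneq A^3\supsetneq\cdots$ decreases strictly until it reaches $0$; in particular $A^2\subsetneq A$, so $\dim A^2\le 2$ and exactly one of the cases $\dim A^2\in\{0,1,2\}$ occurs. The case $\dim A^2=0$ is immediate: all products vanish and $A$ is abelian, giving $As_3^1$. For $\dim A^2=2$, the quotient $A/A^2$ is one-dimensional, so $A$ is generated by a single element $e_1$. First I would rule out $A^3=0$: otherwise $A$ would be $2$-nilpotent and $A^2=\langle e_1\cdot e_1\rangle$ could be at most one-dimensional, a contradiction. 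Hence $\dim A^3=1$ and $A^4=0$; setting $e_2=e_1\cdot e_1$ and $e_3=e_1\cdot e_2$ and using associativity ($e_1\cdot e_2=(e_1\cdot e_1)\cdot e_1=e_2\cdot e_1$, while $e_2\cdot e_2=e_1\cdot e_3=0$) yields precisely the null-filiform table $As_3^6$.

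The heart of the argument is $\dim A^2=1$. Here $A^3\subsetneq A^2$ forces $A^3=0$, so $A$ is $2$-nilpotent and every product lands in $A^2=\langle e_3\rangle$ with $e_3\cdot A=A\cdot e_3=0$. Thus the multiplication is encoded by a nonzero bilinear form $\Gamma=(\gamma_{ij})$ on the plane $V=\langle e_1,e_2\rangle$ via $e_i\cdot e_j=\gamma_{ij}e_3$ for $i,j\in\{1,2\}$, and associativity is automatic since all triple products vanish. The remaining task is to classify nonzero $\Gamma\in M_2(\mathbb C)$ up to the action $\Gamma\mapsto\mu\,g^{\mathsf T}\Gamma g$, where $g\in GL_2(\mathbb C)$ changes the basis of $V$ and $\mu\in\mathbb C^{*}$ rescales $e_3$. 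I would split $\Gamma=S+K$ into symmetric and skew parts: the purely skew case $S=0$ normalizes to $As_3^2$; the purely symmetric case $K=0$ reduces by the complex congruence normal form to rank one ($As_3^3$) or rank two ($As_3^5(0)$); and the mixed case splits according to $\det\Gamma$, giving $As_3^4$ when $\det\Gamma=0$ and the family $As_3^5(\lambda)$, $\lambda\ne0$, when $\det\Gamma\ne0$.

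Finally I would verify pairwise non-isomorphism using invariants. The pair $(\dim A^2,\dim A^3)$ separates $As_3^1$, $As_3^6$, and the whole $\dim A^2=1$ list from one another; inside that list, the symmetry type of $\Gamma$ (whether $S=0$, $K=0$, or neither) together with $\operatorname{rank}\Gamma$ distinguishes $As_3^2$, $As_3^3$, $As_3^4$ and the rank-two mixed forms, while the conjugacy class of the operator $T=\Gamma^{-1}\Gamma^{\mathsf T}$ is a congruence-and-scaling invariant (it transforms by $T\mapsto g^{-1}Tg$ and is unchanged by $\mu$) that pins down the parameter. The main obstacle I expect is exactly this finest step: executing the canonical form for bilinear forms on $\mathbb C^{2}$ in the mixed symmetric/skew case and proving $\lambda$ is a genuine modulus. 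Here one must also account for the identification $\lambda\sim-\lambda$, realized by $e_2\mapsto-e_2$, so that the true invariant is $\operatorname{tr}T=2-\lambda^{2}$ (with characteristic polynomial $t^{2}-(2-\lambda^{2})t+1$) rather than $\lambda$ itself.
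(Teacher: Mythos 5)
The paper does not prove this theorem at all: it is imported verbatim from \cite{Kobayashi} (hence the citation in the theorem header), so there is no internal proof to compare yours against. Your blind argument is a correct, self-contained derivation. The stratification by $\dim A^2$ is sound: $\dim A^2=2$ forces $\dim A/A^2=1$, hence (by the standard nilpotent-generation argument $A=\langle e_1\rangle+A^k$ for all $k$) $A$ is one-generated, and your ruling out of $A^3=0$ plus associativity ($e_1e_2=e_2e_1$, $e_2e_2=e_1e_3=0$) gives exactly $As_3^6$; $\dim A^2=1$ forces $A^3=0$, all triple products vanish, associativity is vacuous, and the algebra is encoded by a nonzero bilinear form $\Gamma$ on $A/A^2$ with isomorphism becoming congruence up to scaling, $\Gamma\mapsto\mu\,g^{T}\Gamma g$. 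Your case split (skew; symmetric of rank $1$ or $2$; mixed with $\det\Gamma=0$; mixed with $\det\Gamma\neq0$) reproduces the list exactly, and the asymmetry operator $T=\Gamma^{-1}\Gamma^{T}$, which transforms by conjugation and is insensitive to $\mu$, is the right invariant for the modulus, with $\operatorname{tr}T=2-\lambda^2$ and $\det T=1$. The one step you leave as a sketch---that every nondegenerate mixed $\Gamma$ admits a basis with $\Gamma(e_2,e_1)=0$ and $\Gamma(e_1,e_1)=\Gamma(e_2,e_2)\neq0$---does go through: choose $v$ with $\Gamma(v,v)\neq0$ avoiding finitely many bad lines, let $u$ span the kernel of the functional $\Gamma(v,\cdot)$, rescale $u$ over $\mathbb{C}$ so the two diagonal values agree, and absorb the common value into $e_3$; then $\lambda\neq0$ precisely because $\Gamma$ is not symmetric. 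So the flagged gap is fillable and the plan is complete.

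One point deserves emphasis because it is a genuine (minor) correction to the statement as quoted: as you note, $e_2\mapsto -e_2$ gives $As_3^5(\lambda)\cong As_3^5(-\lambda)$, so the family is not literally pairwise non-isomorphic over all $\lambda\in\mathbb{C}$; the true invariant is $\lambda^2$ (equivalently $\operatorname{tr}T$), i.e.\ $\lambda$ up to sign. The paper itself records analogous identifications for its own families (e.g.\ $AD_3^8(\alpha,\beta)\cong AD_3^8(-\beta,-\alpha)$ and $AD_3^{15}(\alpha,\beta,\gamma,0)\cong AD_3^{15}(\alpha,-\beta,-\gamma,0)$) but omits this one when quoting Theorem \ref{assdim3}, so your derivation adds value beyond what the paper provides.
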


\begin{ex}[\cite{DGC}]\label{example3} Let $(A, \cdot)$ be a 3-dimensional associative algebra with a basis $\{e_1, e_2, e_3\}$ whose
nonzero products are given by $As_3^6.$ By a straightforward computation, $(A, \rhd, \lhd)$ is a compatible anti-dendriform algebra structure on
$As_3^6$ with the following non-zero products:
\[
e_1\vtr e_1=\frac12e_2+\alpha e_3,\ e_1\vtl e_1=\frac12e_2-\alpha e_3,\ e_1\vtr e_2=e_2\vtl e_1=2e_3,\ e_2\vtr e_1=e_1\vtl e_2=-e_3,\]
where $\alpha\in \mathbb{C}$. Note that algebras can be isomorphic for different values of the parameter $\alpha$.

\end{ex}

\begin{defn} An $n$-dimensional algebra $A$ is called \textbf{null-filiform} if $\dim A^i = (n+1)-i,  \  1\leq i\leq n+1.$

\end{defn}

Note that an algebra has a maximum nilpotency index if and only if it is null-filiform. For
a nilpotent algebra, the condition of null-filiformity is equivalent to the condition that the algebra is one generated.
All null-filiform associative algebras were described in \cite{DO}:

\begin{thm} An arbitrary $n$-dimensional null-filiform associative algebra is isomorphic to the algebra:
$$\mu_n^0: \ e_ie_j=e_{i+j}, \ 2\leq i+j\leq n,$$
where $\{e_1, e_2, \dots, e_n\}$ is a basis of the algebra $\mu_n^0$ and the omitted products vanish.
\end{thm}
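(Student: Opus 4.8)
The plan is to exploit the defining dimension condition to show that a null-filiform associative algebra is singly generated, and then to read off the multiplication table from associativity. Write $A^1 \supseteq A^2 \supseteq \cdots$ for the series defined above; the null-filiform hypothesis means $\dim A^i = n+1-i$, so that $\dim(A/A^2)=1$ and $A^{n+1}=\{0\}$ while $A^n \neq \{0\}$. First I would fix an element $e_1 \in A \setminus A^2$, so that the class of $e_1$ spans the one-dimensional space $A/A^2$.

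The key step is to prove, by induction on $m$, the identity $A^m = \mathbb{C}\, e_1^m + A^{m+1}$ for all $m \geq 1$, where $e_1^m$ denotes the $m$-fold product of $e_1$ with itself (well defined by associativity). The base case $m=1$ is exactly $A = \mathbb{C}\, e_1 + A^2$, which holds because $\dim(A/A^2)=1$ and $e_1 \notin A^2$. For the inductive step I would first record the standard associative fact that $A^{m+1}=A\cdot A^m$; combining this with the inductive hypothesis $A^m=\mathbb{C}\,e_1^m+A^{m+1}$ gives $A^{m+1}=A\cdot(\mathbb{C}\,e_1^m+A^{m+1})$, and since $A=\mathbb{C}\,e_1+A^2$ one computes $A\cdot e_1^m \subseteq \mathbb{C}\,e_1^{m+1}+A^{m+2}$ and $A\cdot A^{m+1}\subseteq A^{m+2}$, yielding $A^{m+1}=\mathbb{C}\,e_1^{m+1}+A^{m+2}$.

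Feeding $A^{n+1}=\{0\}$ into this identity and descending, I obtain $A^n=\mathbb{C}\,e_1^n$, then $A^{n-1}=\mathbb{C}\,e_1^{n-1}+\mathbb{C}\,e_1^n$, and in general $A=\mathbb{C}\,e_1+\mathbb{C}\,e_1^2+\cdots+\mathbb{C}\,e_1^n$. Because $\dim A^n=1$ forces $e_1^n\neq 0$ and, more generally, $\dim A^m=n+1-m$ equals the number of spanning powers $e_1^m,\dots,e_1^n$, these powers must be linearly independent at each stage; in particular $\{e_1,e_1^2,\dots,e_1^n\}$ is a basis of $A$. Setting $e_i:=e_1^i$, associativity gives $e_i\cdot e_j=e_1^{i+j}=e_{i+j}$ whenever $i+j\leq n$, while $e_1^{i+j}\in A^{i+j}\subseteq A^{n+1}=\{0\}$ whenever $i+j>n$; this is precisely the table of $\mu_n^0$, so $A\cong \mu_n^0$.

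I expect the main obstacle to be the inductive identity $A^m=\mathbb{C}\,e_1^m+A^{m+1}$, and within it the reduction $A^{m+1}=A\cdot A^m$: one must check carefully that, under the recursive definition $A^{i+1}=\sum_{k=1}^i A^k A^{i+1-k}$, associativity collapses every summand into the single product $A\cdot A^m$ of all length-$(m+1)$ words. Once that reduction is in hand the remainder is bookkeeping, since the null-filiform dimension count does all the work of turning a spanning set of powers into a basis.
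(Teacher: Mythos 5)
Your proof is correct. Note that the paper itself offers no proof of this statement: it quotes the classification from the reference \cite{DO}, where the argument is essentially the one you give --- null-filiformity forces the algebra to be generated by a single element $e_1\notin A^2$, the inductive identity $A^m=\bbC\,e_1^m+A^{m+1}$ (together with $A^{m+1}=A\cdot A^m$, valid in any associative algebra) shows the powers $e_1,e_1^2,\dots,e_1^n$ span, and the dimension count $\dim A^i=n+1-i$ turns them into a basis on which associativity yields exactly the table of $\mu_n^0$. The one step you flag as delicate, collapsing $\sum_{k=1}^{i}A^kA^{i+1-k}$ into $A\cdot A^i$, is indeed the only point requiring an explicit induction (every product of $i+1$ factors equals $a_1\cdot(a_2\cdots a_{i+1})$ by associativity), and your sketch of it is sound.
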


\section{Anti-dendriform algebra associated to Null-filiform algebra}

In \cite{DGC} the authors fully classified one and two-dimensional anti-dendriform algebras as examples and gave one example for the three-dimensional anti-dendriform algebra. In this section, we prove that there does not exist
a compatible anti-dendriform structure on null-filiform algebras of dimension more than four and we will give a complect classification for three-dimensional case.

\begin{thm}
Let $(A,\cdot)$  be a $n$-dimensional associative  algebra over the complex field $\mathbb{C}$ with a basis $\{e_1,e_2, \dots, e_n\}$ whose non-zero products are as follows:
\[
e_i\cdot e_j=e_{i+j}, \quad 2\leq i+j\leq n.
\]
If $n\geq4$, then there is not a compatible anti-dendriform algebra structure on $(A,\cdot)$.
\end{thm}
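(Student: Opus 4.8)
The plan is to reduce to a single base case by a central-quotient induction, and then to expose the obstruction in the degree-$4$ part of the defining identities. Reading \eqref{id2} as an operator identity $L_\vtr(x)L_\vtr(y)=-L_\vtr(x\cdot y)$ (where $L_\vtr(x)y=x\vtr y$) gives $L_\vtr(e_k)=(-1)^{k-1}L_\vtr(e_1)^k$; since $L_\vtr(e_1)L_\vtr(e_n)=-L_\vtr(e_1\cdot e_n)=0$, the operator $N:=L_\vtr(e_1)$ satisfies $N^{n+1}=0$, hence $N^n=0$ on the $n$-dimensional space $A$, so $L_\vtr(e_n)=0$ and $e_n\vtr e_j=0$ for all $j$. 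The symmetric computation from \eqref{id7} yields $e_j\vtl e_n=0$, and together with $e_n\cdot e_j=e_j\cdot e_n=0$ and $\vtr+\vtl=\cdot$ this gives $e_n\in Z_{AD}(A)$. Therefore $Z_{AD}(A)=Z_{As}(A)=\langle e_n\rangle$, and Proposition~\ref{compatible} endows $A/\langle e_n\rangle\cong\mu_{n-1}^0$ with a compatible structure. By downward induction it then suffices to rule out the base case $n=4$.

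For the base case I would pass to the associated graded algebra for the filtration $A^m=\langle e_m,\dots,e_n\rangle$, on which $\vtr,\vtl$ become homogeneous of degree $0$; writing $e_i\vtr e_j=\gamma_{ij}e_{i+j}+(\text{higher order terms})$ and $e_i\vtl e_j=\delta_{ij}e_{i+j}+(\text{higher order terms})$, one has $\gamma_{ij}+\delta_{ij}=1$ for $i+j\le n$, and the $\gamma_{ij},\delta_{ij}$ satisfy \eqref{id1}--\eqref{id7}. Evaluating \eqref{id2}, \eqref{id5}, \eqref{id6} at $x=y=z=e_1$ and comparing the coefficient of $e_3$ gives, with $p=\gamma_{11}$ and $q=\gamma_{12}$, the two relations $pq=q-1$ and $pq=1$; hence $q=2$, $p=\tfrac12$, so $\gamma_{11}=\delta_{11}=\tfrac12$, $\gamma_{12}=\delta_{21}=2$, $\gamma_{21}=\delta_{12}=-1$, matching the leading terms of Example~\ref{example3}.

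The contradiction appears one degree higher, which is exactly where $n\ge 4$ enters. Comparing the coefficient of $e_4$ in \eqref{id2} for $(x,y,z)=(e_1,e_1,e_2),(e_1,e_2,e_1),(e_2,e_1,e_1)$ gives $2\gamma_{13}=-\gamma_{22}$, $\gamma_{31}=\gamma_{13}$ and $\tfrac12\gamma_{22}=-\gamma_{31}$, so that $\gamma_{31}=\gamma_{13}$ and $\gamma_{22}=-2\gamma_{13}$; doing the same in \eqref{id5} for $(e_1,e_1,e_2)$ and $(e_1,e_2,e_1)$ gives $\gamma_{22}=\delta_{13}=1-\gamma_{13}$ and $\gamma_{31}=\delta_{13}=1-\gamma_{13}$. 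Combining the two sources, $-2\gamma_{13}=1-\gamma_{13}$ forces $\gamma_{13}=-1$, while $\gamma_{31}=\gamma_{13}=-1$ together with $\gamma_{31}=1-\gamma_{13}=2$ is impossible. Hence no compatible structure exists on $\mu_4^0$, completing the induction.

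The delicate point, used implicitly above, is that the operations respect the filtration, i.e. $e_i\vtr e_j,\ e_i\vtl e_j\in A^{i+j}$, so that the leading coefficients $\gamma_{ij},\delta_{ij}$ are well defined and the degree-$4$ relations decouple from the lower-degree data; I expect this to be the main obstacle. It can be secured by the same central-quotient mechanism: the two-dimensional classification (the algebras $AD_2^2$ and $AD_2^3(\lambda)$) shows all products land in $A^2$ for $\mu_2^0$, and lifting through the tower $\mu_2^0,\mu_3^0,\mu_4^0$ via Proposition~\ref{compatible} (using $e_m\in Z_{AD}$ at each stage) propagates $e_i\vtr e_j\in A^{i+j}$ for $i+j\le m$, while the operator relation $L_\vtr(e_k)=(-1)^{k-1}N^k$ with $N^m=0$ simultaneously kills the over-weight products. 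Once this filtration control is in place, the degree-$4$ computation above is routine linear algebra.
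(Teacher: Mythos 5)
Your proposal is correct in substance and takes a genuinely different route from the paper. The paper works with general $n$ in one pass: writing $e_i\vtr e_j=\sum_k\alpha_{i,j}^ke_k$, it uses \eqref{id5} on the triples $(1,i,j)$, $(i,1,1)$, $(1,i,1)$ to express every product through $e_1\vtr e_1$ and $e_1\vtr e_2$ (in particular forcing $e_1\vtr e_i=\frac12e_{i+1}$ for $3\le i\le n-1$), then extracts two systems of equations from \eqref{id2} and \eqref{id4} at $(e_1,e_1,e_1)$, and finally reaches the contradiction $\frac14e_4=-\frac12e_4$ from \eqref{id1} at $(e_1,e_1,e_2)$ --- the same degree-$4$ obstruction you find, obtained from explicit structure constants rather than from a grading. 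Your reduction --- the operator identities $L_\vtr(x)L_\vtr(y)=-L_\vtr(x\cdot y)$ and $R_\vtl(z)R_\vtl(y)=-R_\vtl(y\cdot z)$ forcing $e_n\in Z_{AD}(A)$, then Proposition~\ref{compatible} and downward induction to $n=4$ --- is clean and correct, and it explains structurally why dimension $4$ is the critical case. What the paper's method buys in exchange is that it never needs any filtration property: the formulas it derives are stronger than filtration control, and the same computation simultaneously yields the $n=3$ classification (Corollary~\ref{null-fliform}), whereas your tower bottoms out at the two-dimensional classification of \cite{DGC}. Your degree-$3$ and degree-$4$ coefficient relations are all correct (I checked: $pq=q-1$ from \eqref{id2} plus \eqref{id5}, $pq=1$ from \eqref{id6} plus \eqref{id5}, and the incompatible values $\gamma_{31}=\gamma_{13}=-1$ versus $\gamma_{31}=1-\gamma_{13}=2$).

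The one step you must tighten is the filtration property, which you rightly flag as the delicate point. As stated, ``the operator relation $L_\vtr(e_k)=(-1)^{k-1}N^k$ with $N^m=0$ kills the over-weight products'' is not sufficient: $N^m=0$ alone does not kill, say, $N^2e_3$ in $\mu_4^0$. What works is the combination you almost state: the quotient step gives $Ne_k=e_1\vtr e_k\in A^{k+1}$ for $k\le m-1$ (because in the quotient $\bar e_1\vtr\bar e_k\in\bar A^{k+1}$ and $\bar e_1\vtr\bar e_{m-1}=0$), centrality gives $Ne_m=0$, so $N$ raises filtration degree; only then does the operator relation $e_i\vtr e_j=(-1)^{i-1}N^ie_j$ deliver the full filtration for $\vtr$, over-weight products included, and symmetrically for $\vtl$ via $M=R_\vtl(e_1)$. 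You also need the small observation that the two-dimensional classification controls your quotient only up to isomorphism, so the base of the tower should be phrased invariantly: in $AD_2^2$ and $AD_2^3(\lambda)$ all products lie in $A\cdot A$ and all products involving $A\cdot A$ vanish, and both statements are preserved by isomorphisms of compatible pairs. With these repairs the induction closes and your degree-$4$ computation finishes the proof.
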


\begin{proof}
Suppose there is a compatible anti-dendriform algebra structure on $\mu_n^0$. Let $(A,\vtr,\vtl)$ be such algebra structure on $\mu_n^0$. By definition we have $e_{i+j}=e_i\vtr e_j+e_i\vtl e_j$. Thus we identify $e_i \vtl e_j$ in terms of $e_i\vtr e_j$. Set $e_i\vtr e_j=\sum\limits_{k=1}^n\alpha_{i,j}^ke_k$. Then we can write
\[
\begin{cases}
e_i\vtr e_j=\sum\limits_{k=1}^n\alpha_{i,j}^ke_k,\\
e_i\vtl e_j=-\sum\limits_{k=1}^n\alpha_{i,j}^ke_k+e_{i+j},& 1\leq i,j\leq n, \ i+j\leq n,\\
e_i\vtl e_j=-\sum\limits_{k=1}^n\alpha_{i,j}^ke_k,& 1\leq i,j\leq n, \ i+j> n.
\end{cases}
\]
From \eqref{id5} for the triples $(1,i,j)$ we get
\[
e_{i+1}\vtr e_j=e_1\vtl e_{i+j}=-e_1\vtr e_{i+j}+e_{i+j+1},\ \ \quad 1\leq i\leq n-1.
\]
Hence,
$$
\begin{array}{ll}
e_i\vtr e_j=-e_1\vtr e_{i+j-1}+e_{i+j},& 2\leq i\leq n, \ 1\leq j \leq n-i, \ \ i+j\leq n,\\
e_i\vtr e_j=-e_1\vtr e_{i+j-1},& 2\leq i\leq n, \ 1\leq j \leq n-i, \ \ i+j=n+1,\\
e_i\vtr e_j=0,& 2\leq i\leq n, \ 1\leq j \leq n-i, \ \ i+j>n+1.
\end{array}
$$
Consider \eqref{id5} for the triples $(i,1,1),\ (1,i,1), \ 1\leq i\leq n-1$:
\[
\begin{split}
 e_{i+1}\vtr e_1&=e_i\vtl e_2,\\
 e_{i+1}\vtr e_1&=e_1\vtl e_{i+1}=e_{i+2}-e_1\vtr e_{i+1},\\
 e_{i}\vtl e_2&=e_{i+1}-e_i\vtr e_2=e_{i+2}-(-e_1\vtr e_{i+1}+e_{i+2})=e_1\vtr e_{i+1}.\\
\end{split}
\]
and we obtain
\[ e_1\vtr e_i=e_1\vtl e_i=\frac12e_{i+1}, \quad 3\leq i\leq n-1, \quad e_1\vtr e_n=e_1\vtl e_n=0.
\]

Thus, we can write multiplication table as follows:
\[
\begin{cases}
e_1\vtr e_i=\sum\limits_{k=1}^n\alpha_{1,i}^ke_k, & 1\leq i \leq 2,\\
e_1\vtr e_i=\frac12e_{i+1}, & 3\leq i\leq n-1,\\
e_1\vtr e_n=0,\\
e_2\vtr e_1=e_3-\sum\limits_{k=1}^n\alpha_{1,2}^ke_k,\\
e_i\vtr e_1=\frac12e_{i+1}, & 3\leq i\leq n-1,\\

e_i\vtr e_j=\frac12e_{i+j}, & 2\leq i\leq n, \ 2\leq j\leq n-i, \ i+j\leq n\\
e_i\vtr e_j=0, & 2\leq i\leq n, \ 1\leq j\leq n, \ i+j>n\\
e_1\vtl e_i=e_{i+1}-\sum\limits_{k=1}^n\alpha_{1,i}^ke_k, & 1\leq i\leq 2\\
e_1\vtl e_i=\frac12e_{i+1}, & 3\leq i\leq n-1,\\
e_2\vtl e_1=\sum\limits_{k=1}^n\alpha_{1,2}^ke_k,\\
e_i\vtl e_1=\frac12e_{i+1}, & 3\leq i\leq n-1,\\
e_i\vtl e_j=\frac12e_{i+j}, & 2\leq i\leq n, \ 2\leq j\leq n-i, \  \ i+j\leq n,\\
e_i\vtl e_j=0, & 2\leq i\leq n, \  1\leq j\leq n,\  i+j> n.
\end{cases}
\]
Consider \eqref{id2} for the triple $(1,1,1)$.
\[
\begin{split}
e_1\vtr(e_1\vtr e_1)&=-e_2\vtr e_1,\\
\sum_{k=1}^n\alpha_{1,1}^k(e_1\vtr e_k)&=-e_3+\sum_{k=1}^n\alpha_{1,2}^ke_k,\\
\alpha_{1,1}^1\sum_{k=1}^n\alpha_{1,1}^ke_k+\alpha_{1,1}^2\sum_{k=1}^n\alpha_{1,2}^ke_k+\frac12\sum_{k=3}^{n-1}\alpha_{1,1}^ke_{k+1}&=-e_3+\sum_{k=1}^n\alpha_{1,2}^ke_k.
\end{split}
\]
From this we derive the following system of equalities:
\begin{equation}\label{sys1}
\begin{cases}
\alpha_{1,1}^1\alpha_{1,1}^1+\alpha_{1,1}^2\alpha_{1,2}^1=\alpha_{1,2}^1,\\
\alpha_{1,1}^1\alpha_{1,1}^2+\alpha_{1,1}^2\alpha_{1,2}^2=\alpha_{1,2}^2,\\
\alpha_{1,1}^1\alpha_{1,1}^3+\alpha_{1,1}^2\alpha_{1,2}^3=-1+\alpha_{1,2}^3,\\
\alpha_{1,1}^1\alpha_{1,1}^k+\alpha_{1,1}^2\alpha_{1,2}^k+\frac12\alpha_{1,1}^{k-1}=\alpha_{1,2}^k \ \ 4\leq k\leq n.
\end{cases}
\end{equation}
From \eqref{id4} for the triple $(1, 1, 1)$ we get
\begin{equation*}e_1\vtr (e_1\vtr e_1)=\sum_{k=1}^n\alpha_{1,1}^k(e_1\vtr e_k)=\end{equation*}
\begin{equation}\label{111left}=\alpha_{1,1}^1\sum_{k=1}^n\alpha_{1,1}^ke_k+\alpha_{1,1}^2\sum_{k=1}^n\alpha_{1,2}^ke_k+\frac12\sum_{k=3}^{n-1}\alpha_{1,1}^ke_{k+1}.
\end{equation}
On the other hand
\begin{equation*}(e_1\vtl e_1)\vtl e_1=-\sum_{k=1}^n\alpha_{1,1}^k(e_k\vtl e_1)+e_2\vtl e_1=\end{equation*}
\begin{equation}\label{111right}=\alpha_{1,1}^1\sum_{k=1}^n\alpha_{1,1}^ke_k-\alpha_{1,1}^1e_2-\alpha_{1,1}^2\sum_{k=1}^n\alpha_{1,2}^ke_k-\frac12\sum_{k=3}^{n-1}\alpha_{1,1}^ke_{k+1}+\sum_{k=1}^n\alpha_{1,2}^ke_k,\\
\end{equation}
Equating the expressions (\ref{111left}) and (\ref{111right}), we get the equality:
$$
2\alpha_{1,1}^2\sum_{k=1}^n\alpha_{1,2}^ke_k+\sum_{k=3}^{n-1}\alpha_{1,1}^ke_{k+1}=-\alpha_{1,1}^1e_2+\sum_{k=1}^n\alpha_{1,2}^ke_k.
$$
Comparing coefficients at the basis of this equality derives
\begin{equation}\label{sys2}
\begin{cases}
2\alpha_{1,1}^2\alpha_{1,2}^1=\alpha_{1,2}^1,\\
2\alpha_{1,1}^2\alpha_{1,2}^2=-\alpha_{1,1}^1+\alpha_{1,2}^2,\\
2\alpha_{1,1}^2\alpha_{1,2}^3=\alpha_{1,2}^3,\\
2\alpha_{1,1}^2\alpha_{1,2}^k+\alpha_{1,1}^{k-1}=\alpha_{1,2}^k, \qquad 4\leq k\leq n.
\end{cases}
\end{equation}
The first equality in \eqref{sys2} implies two possible cases: $\alpha_{1,2}^1\neq0$ or $\alpha_{1,2}^1= 0$.

\textbf{Case 1.} Let $\alpha_{1,2}^1\neq0$. Then $\alpha_{1,1}^2=\frac12$. The second equality in \eqref{sys2} implies $\alpha_{1,1}^1=0$. From \eqref{sys1} we get $\alpha_{1,2}^1=0$ which is a contradiction.

\textbf{Case 2.} Let $\alpha_{1,2}^1=0$. From \eqref{sys1} we get $\alpha_{1,1}^1=0$ and
\begin{equation}\label{sys3}
\begin{cases}
\alpha_{1,1}^2\alpha_{1,2}^2=\alpha_{1,2}^2,\\
\alpha_{1,1}^2\alpha_{1,2}^3=\alpha_{1,2}^3-1,\\
\alpha_{1,1}^2\alpha_{1,2}^k+\frac12\alpha_{1,1}^{k-1}=\alpha_{1,2}^k, \ \ 4\leq k\leq n.
\end{cases}
\end{equation}
From \eqref{sys2} and \eqref{sys3} we obtain the following  restrictions
  $$\alpha_{1,2}^2=0, \ \alpha_{1,2}^3=2, \  \alpha_{1,2}^k=0,  \ 4\leq k\leq n, \ \alpha_{1,1}^2=\frac12, \ \alpha_{1,1}^t=0,  \
 \ 3\leq t\leq n-1.$$
Then by considering \eqref{id1} for the triple $(1,1,2)$ we derive
$$(e_1\vtr e_1)\vtl e_2=e_1\vtr (e_1\vtl e_2),$$$$
(\frac12e_2+\alpha_{1,1}^ne_n)\vtl e_2=e_1\vtr (-e_3),$$$$
\frac14e_4=-\frac12e_4,
$$which implies a contradiction when there is a basis element $e_4$.
\end{proof}

\begin{cor}\label{null-fliform} Any three-dimensional anti-dendriform algebra associated to the null-fliform algebra $\mu_3^0$ is isomorphic to one of the two non-isomorphic algebras:
$$AD_3^1: \    e_1\vtr e_1=\frac12e_2, \ e_1\vtl e_1=\frac12e_2,\ e_1\vtr e_2=e_2\vtl e_1=2e_3,\ e_2\vtr e_1=e_1\vtl e_2=-e_3;$$
$$AD_3^2: \ e_1\vtr e_1=\frac12e_2+e_3,\ e_1\vtl e_1=\frac12e_2-e_3,\ e_1\vtr e_2=e_2\vtl e_1=2e_3, \ e_2\vtr e_1=e_1\vtl e_2=-e_3.$$
\end{cor}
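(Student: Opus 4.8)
The plan is to read off the structure constants from the computation already carried out in the preceding theorem, specialized to $n=3$, and then to classify the resulting one-parameter family up to isomorphism. First I would note that $\mu_3^0$ is precisely the algebra $As_3^6$ of Theorem~\ref{assdim3}, since its products are $e_1\cdot e_1=e_2$ and $e_1\cdot e_2=e_2\cdot e_1=e_3$. Running the argument of the previous theorem with $n=3$ forces $\alpha_{1,2}^1=0$ (Case~2), hence $\alpha_{1,1}^1=0$, $\alpha_{1,1}^2=\frac12$, $\alpha_{1,2}^2=0$, $\alpha_{1,2}^3=2$, while $\alpha_{1,1}^3$ remains unconstrained; set $\alpha:=\alpha_{1,1}^3$. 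The only place where a contradiction arose for general $n$ was identity~\eqref{id1} on the triple $(1,1,2)$, which produced the relation $\frac14e_4=-\frac12e_4$; for $n=3$ there is no basis element $e_4$, so both sides vanish and no further restriction appears. The surviving products are exactly those of Example~\ref{example3}, so every compatible anti-dendriform structure on $\mu_3^0$ is one of the algebras $AD(\alpha)$ with
\[
e_1\vtr e_1=\frac12e_2+\alpha e_3,\quad e_1\vtl e_1=\frac12e_2-\alpha e_3,\quad e_1\vtr e_2=e_2\vtl e_1=2e_3,\quad e_2\vtr e_1=e_1\vtl e_2=-e_3,
\]
and, by Example~\ref{example3}, each of these is indeed an anti-dendriform algebra.

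It remains to decide which $AD(\alpha)$ are isomorphic. Since any anti-dendriform isomorphism intertwines the associated associative products $\cdot=\vtr+\vtl$, it must be an automorphism of $(\mu_3^0,\cdot)=As_3^6$. As this algebra is generated by $e_1$, such an automorphism is determined by the image $f_1=ae_1+be_2+ce_3$ with $a\neq0$, which forces $f_2=f_1\cdot f_1=a^2e_2+2abe_3$ and $f_3=f_1\cdot f_2=a^3e_3$ (a routine check confirms every such $f_1$ extends to an automorphism). I would then compute $f_1\vtr f_1$ inside $AD(\alpha)$ by bilinearity, obtaining $\frac12a^2e_2+(a^2\alpha+ab)e_3$, and compare it with the value $\frac12f_2+\beta f_3=\frac12a^2e_2+(ab+\beta a^3)e_3$ required for the structure to be $AD(\beta)$ in the basis $\{f_i\}$. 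Matching the coefficient of $e_3$ yields the transformation law $\beta=\alpha/a$; the analogous comparisons for $f_1\vtl f_1$, $f_1\vtr f_2$ and $f_2\vtr f_1$ give either the same relation or trivial identities, so no additional constraints appear.

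The classification follows at once: if $\alpha=0$ then $\beta=0$ for every admissible $a$, so $AD(0)=AD_3^1$ forms a single class; if $\alpha\neq0$ then choosing $a=\alpha$ gives $\beta=1$, so all $AD(\alpha)$ with $\alpha\neq0$ are isomorphic to $AD(1)=AD_3^2$, and the two classes are distinct because $\beta=\alpha/a$ cannot send a nonzero parameter to $0$. I expect the main obstacle to be the isomorphism step: one must correctly identify that the admissible base changes are exactly the automorphisms of $As_3^6$, set up their explicit triangular form, and verify that the several product comparisons are mutually consistent and collapse to the single scaling relation $\beta=\alpha/a$ rather than over-determining the problem.
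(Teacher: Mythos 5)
Your proposal is correct and follows essentially the same route as the paper: it specializes Case 2 of the preceding theorem to $n=3$ to obtain the one-parameter family of Example~\ref{example3}, then uses the base changes induced by the generator $e_1$ of $\mu_3^0$ (i.e.\ $e'_1=A_1e_1+A_2e_2+A_3e_3$, $e'_2=A_1^2e_2+2A_1A_2e_3$, $e'_3=A_1^3e_3$) to derive the scaling law $\alpha'=\alpha/A_1$ and split the family into the classes $\alpha=0$ and $\alpha\neq0$. Your treatment is in fact slightly more complete than the paper's, since you explicitly note why the $n\geq 4$ contradiction disappears when there is no $e_4$ and why the two classes cannot be isomorphic.
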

\begin{proof} According to Case 1 in the proof of the previous theorem, there is no algebra, and due to Case 2 we obtain the multiplication table of three-dimensional anti-dendriform algebra associated to the associated on the null-filiform algebra $\mu_3^0$ as Example \ref{example3}:
$$e_1\vtr e_1=\frac12e_2+\alpha e_3,\ e_1\vtl e_1=\frac12e_2-\alpha e_3, \ e_1\vtr e_2=e_2\vtl e_1=2e_3, \ e_2\vtr e_1=e_1\vtl e_2=-e_3.$$

By using the multiplication of the null-fliform algebra we consider the general basis change:
$$e'_1=A_1e_1+A_2e_2+A_3e_3, \ e'_2=A_1^2e_2+2A_1A_2e_3, \ e'_3=A_1^3e_3.$$

Equality $e'_1\vtr e'_1=\frac12e'_2+\alpha' e'_3$ derives $$\alpha'=\frac1{A_1}\alpha.$$

If $\alpha=0,$ then we obtain the algebra $AD_3^1$.

If $\alpha\neq 0,$ then putting $a_1=\alpha$ we obtain the algebra $AD_3^2$.
\end{proof}


\section{Classification of three dimensional anti-dendriform algebras}

In this section we will classify all three-dimensional anti-dendriform algebras. According to Proposition \ref{idempotent} it is enough to consider only nilpotent algebras. According to Theorem \ref{assdim3}, there are six nilpotent algebras $As^1_3-As_3^6$. Since $As_3^6$ is a null-fliform algebra, we have already carried out the classification in Corollary \ref{null-fliform}.

\begin{thm} Any three-dimensional complex anti-dendriform algebra associated to the algebra $As^1_3$ is isomorphic to one of the following pairwise non-isomorphic algebras:

$AD_3^3: \ \mbox{is trivial, that is, all products are zero;}$

$AD_3^4: \ e_1\vtr e_2=e_3, \ e_2\vtr e_1=-e_3,\ e_1\vtl e_2=-e_3, \ e_2\vtl e_1=e_3;$

$AD_3^5: \ e_1\vtr e_1=e_3,\ e_1\vtl e_1=-e_3;$

$AD_3^6: \ e_1\vtr e_2=e_3, \ e_1\vtl e_2=-e_3;$

$AD_3^7(\lambda): \ \begin{cases}
e_1\vtr e_1=e_3,\ e_1\vtr e_2=\lambda e_3, \ e_2\vtr e_2=e_3,\\
e_1\vtl e_1=-e_3,\ e_1\vtl e_2=-\lambda e_3, \ e_2\vtl e_2=-e_3;
\end{cases}
$\\
where $\lambda\in \mathbb{C}.$
\end{thm}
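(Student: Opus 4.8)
The plan is to exploit that the associated associative algebra $As_3^1$ is Abelian, so $x\cdot y=0$ for all $x,y$, and hence by \eqref{cdot} every compatible structure satisfies $x\vtl y=-(x\vtr y)$. First I would feed this into the anti-dendriform axioms. With $x\cdot y=0$ the two middle terms of \eqref{anti1} vanish, while $(x\vtl y)\vtl z=(x\vtr y)\vtr z$, so \eqref{anti1} collapses to the single requirement $x\vtr(y\vtr z)=(x\vtr y)\vtr z=0$, i.e. $2$-nilpotency of the one operation $\vtr$; and \eqref{anti2} is then automatic. (This is exactly the situation of Proposition \ref{2-nilpotent}.) Consequently, classifying anti-dendriform structures on $As_3^1$ is the same as classifying $3$-dimensional $2$-nilpotent algebras $(A,\vtr)$ up to isomorphism, since a linear map preserving $\vtr$ automatically preserves both $\vtl=-\vtr$ and the (zero) associated product.

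Next I would bound the image. Set $B=A\vtr A$; $2$-nilpotency forces $B$ to lie in both the left and right annihilators, so $x\vtr y$ vanishes as soon as $x\in B$ or $y\in B$. If $\dim B=2$, then products are nonzero only on the $1$-dimensional complement of $B$, whence $B$ is spanned by a single vector $v\vtr v$ and $\dim B\le 1$, a contradiction; thus $\dim B\le 1$. The case $B=0$ gives the trivial algebra $AD_3^3$. Otherwise $\dim B=1$; choosing $e_3$ to span $B$, I write $x\vtr y=\beta(x,y)\,e_3$ for a nonzero bilinear form $\beta$, and $2$-nilpotency gives $\beta(\cdot,e_3)=\beta(e_3,\cdot)=0$, so $\beta$ is carried by a nonzero $2\times2$ matrix $M$ on $\langle e_1,e_2\rangle$.

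The heart of the proof is then a normal-form computation. A basis change preserving the line $\langle e_3\rangle=B$ acts on $M$ by $M\mapsto \gamma\,Q^{\top}MQ$ with $Q\in GL_2(\mathbb{C})$ and $\gamma\in\mathbb{C}^{\ast}$ (the scalar $\gamma$ from rescaling $e_3$; adding multiples of $e_3$ to $e_1,e_2$ acts trivially, since $e_3$ lies in the radical of $\beta$). So I must classify nonzero $2\times2$ complex matrices up to congruence-and-scaling. Separating by $\operatorname{rank}(M)$ and by the symmetric/antisymmetric parts, I would obtain: the rank-one symmetric class, with normal form $e_1\vtr e_1=e_3$, giving $AD_3^5$; the rank-one non-symmetric class, with normal form $e_1\vtr e_2=e_3$, giving $AD_3^6$; and the rank-two (invertible) classes, where the congruence class is governed by the conjugacy class of the cosquare $M^{-\top}M$ (scaling-invariant, of determinant $1$). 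The antisymmetric normal form, with cosquare $-I$, yields $AD_3^4$, and the remaining invertible forms reduce to $M=\left(\begin{smallmatrix}1 & \lambda\\ 0 & 1\end{smallmatrix}\right)$, giving the family $AD_3^7(\lambda)$.

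Finally I would check that the listed algebras are pairwise non-isomorphic, separating them by the congruence invariants above: the rank of $\beta$, the symmetry type in the rank-one case, and the cosquare conjugacy class in the rank-two case (which in particular isolates $AD_3^4$, with cosquare $-I$, from $AD_3^7(\lambda)$, whose cosquare degenerates to a single Jordan block when its eigenvalues coincide). The hard part will be exactly this congruence classification of non-symmetric bilinear forms in dimension two, together with pinning down the precise parametrization of $AD_3^7(\lambda)$ and ruling out any overlap between the rank-two family and the remaining cases.
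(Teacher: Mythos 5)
Your proposal is correct, and its first half coincides with the paper's argument: since $As_3^1$ is abelian, every compatible structure has $\vtl=-\vtr$, the axioms \eqref{anti1}--\eqref{anti2} collapse to $x\vtr(y\vtr z)=(x\vtr y)\vtr z=0$ (this is exactly Proposition \ref{2-nilpotent}), and a linear bijection preserves the pair $(\vtr,\vtl)$ if and only if it preserves $\vtr$ alone, so the problem reduces to classifying three-dimensional $2$-nilpotent algebras $(A,\vtr)$. Where you genuinely differ is in how that classification is obtained: the paper simply cites Theorem \ref{assdim3} (the $2$-nilpotent algebras in that list are $As_3^1,\dots,As_3^5(\lambda)$, which become $AD_3^3,\dots,AD_3^7(\lambda)$ upon setting $\vtl=-\vtr$), whereas you re-derive it from scratch: $\dim(A\vtr A)\le 1$, then nonzero $2\times2$ matrices modulo congruence-and-scaling, separated by rank, by symmetry type in rank one, and by the similarity class of the cosquare $M^{-\top}M$ in rank two. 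Your invariants do close the argument: the cosquare of $\bigl(\begin{smallmatrix}1&\lambda\\0&1\end{smallmatrix}\bigr)$ has determinant $1$ and trace $2-\lambda^2$, so the family $AD_3^7(\lambda)$ sweeps out the classes $\mathrm{diag}(\mu,\mu^{-1})$ with $\mu\neq0,\pm1$ together with $I$ (at $\lambda=0$) and $J_2(-1)$ (at $\lambda=\pm2$), while the antisymmetric form $AD_3^4$ is the unique nonsingular class with cosquare $-I$. The paper's route buys brevity (one citation); yours buys a self-contained proof, an explicit check of the converse direction (that any $2$-nilpotent $\vtr$ with $\vtl=-\vtr$ satisfies \eqref{anti1}--\eqref{anti2}, which the paper leaves implicit), and a sharper parametrization: the cosquare invariant (or the change of basis $e_2\mapsto -e_2$) shows $AD_3^7(\lambda)\cong AD_3^7(-\lambda)$, so $\lambda$ should really be taken up to sign --- a redundancy in the ``pairwise non-isomorphic'' claim that the paper inherits, without comment, from the cited classification.
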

\begin{proof} Let $(A, \cdot)$ be an abelian algebra $As_3^1$ and by definition $e_i\cdot e_j=e_i\vtr e_j+e_i\vtl e_j=0$. Then by Proposition \ref{2-nilpotent} the anti-dendriform algebra $(A, \vtr, \vtl)$ associated to $As_3^1$ is 2-nilpotent. Thus we have $(x\vtr y)\vtr z=x\vtr (y\vtr z)=0$ which implies that $(A, \vtr)$ is associative 2-nilpotent algebra. According to Theorem \ref{assdim3} there are five non-isomorphic three-dimensional associative 2-nilpotent algebras.

    It is possible to consider the multiplication $\vtl$ as above. However, it is not difficult to show that the constructed algebras are isomorphic. This finishes the proof.

\end{proof}

Now we will describe three-dimensional anti-dendriform associated to the algebra $As_3^2$ with the multiplication:
$$e_1e_2=e_3, \ e_2e_1=-e_3. $$

\begin{thm}
Any three-dimensional complex anti-dendriform algebra associated to the algebra $As_3^2$ is isomorphic to one of the following pairwise non-isomorphic algebras:

$AD_3^8(\alpha,\beta): \ \begin{cases}
e_1\vtr e_1=e_3, \ e_1\vtr e_2=\alpha e_3, \
e_2\vtr e_1=\beta e_3,\\[1mm]
e_1\vtl e_1=-e_3, \  e_1\vtl e_2=(1-\alpha)e_3, \
e_2\vtl e_1=(-1-\beta)e_3;
\end{cases}$\\[1mm]

$AD_3^9(\alpha): \ e_1\vtr e_2=\alpha e_3, \ e_2\vtr e_1=-\alpha e_3, \ e_1\vtl e_2=(1-\alpha)e_3, \ e_2\vtl e_1=(-1+\alpha)e_3;
$\\[1mm]

$AD_3^{10}: \ e_1\vtr e_1=e_2, \ e_2\vtr e_1=-e_3, \ e_1\vtl e_1=-e_2, \ e_1\vtl e_2=e_3;$\\[1mm]
where $\alpha, \beta\in \mathbb{C}$ and $AD_3^8(\alpha,\beta)\cong AD_3^8(-\beta,-\alpha)$.
\end{thm}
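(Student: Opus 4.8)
The plan is to set up the most general anti-dendriform structure compatible with $As_3^2$, extract the constraints imposed by the defining identities \eqref{anti1}--\eqref{anti2}, solve the resulting polynomial system, and finally reduce the solution space modulo isomorphism. Since $(A,\cdot)$ has $e_1\cdot e_2=e_3$, $e_2\cdot e_1=-e_3$ and all other products zero, I would first write $e_i\vtr e_j=\sum_k\alpha_{i,j}^k e_k$ and use $e_i\vtl e_j=e_i\cdot e_j-e_i\vtr e_j$ to express $\vtl$ in terms of $\vtr$. Because $e_3$ is central in $(A,\cdot)$ (indeed $A\cdot A\subseteq\langle e_3\rangle$ and $e_3\cdot A=A\cdot e_3=0$), the associated associative algebra is $2$-nilpotent, so every triple product in $\cdot$ vanishes. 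This makes the right-hand sides of \eqref{id2}, \eqref{id3}, \eqref{id5}, \eqref{id6}, \eqref{id7} that involve a $\cdot$-product collapse to $0$, which should drastically cut down the system.

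The key computational step is to evaluate the identities \eqref{id1}--\eqref{id7} on all basis triples $(e_i,e_j,e_k)$. First I would record that any product landing in $e_3$ cannot be further multiplied nontrivially (since $e_3\vtr e_k$ and $e_k\vtr e_3$ feed into central, hence vanishing, expressions), so the only surviving double products come from triples built out of $e_1,e_2$ whose intermediate product is a combination of $e_2$ and $e_3$. I expect the structure to split according to whether $e_1\vtr e_1$ and $e_1\vtl e_1$ have a nonzero $e_2$-component: if the $e_2$-components vanish, the algebra is concentrated in the ``$e_3$-valued'' directions and one gets the two-parameter family $AD_3^8(\alpha,\beta)$ together with its degenerate subcase $AD_3^9(\alpha)$ (where the diagonal terms $e_1\vtr e_1$, $e_1\vtl e_1$ drop out and the off-diagonal coefficients are forced into the antisymmetric pattern by \eqref{id2}); if some $e_2$-component is nonzero, the associativity-splitting identities should rigidly pin down the remaining coefficients and yield the isolated algebra $AD_3^{10}$.

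After the multiplication tables are determined, the remaining task is the classification up to isomorphism. Here I would apply a general change of basis $e_i'=\sum_j A_{ij}e_j$ respecting the grading forced by $\cdot$ (so that $e_3'$ stays proportional to $e_3$ and $\{e_1',e_2'\}$ span a complement), compute how the structure constants $(\alpha,\beta)$ transform, and read off the orbits. I anticipate that the transformation swapping $e_1$ and $e_2$ (which sends $e_3\mapsto -e_3$ because $e_1\cdot e_2=-e_2\cdot e_1$) is exactly what produces the stated equivalence $AD_3^8(\alpha,\beta)\cong AD_3^8(-\beta,-\alpha)$, and that no further identifications occur.

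The main obstacle I foresee is not any single identity but the bookkeeping: the seven identities \eqref{id1}--\eqref{id7} must be checked on enough triples to guarantee that no relation has been missed, and one must verify that the candidate tables $AD_3^8$--$AD_3^{10}$ genuinely \emph{satisfy} \eqref{anti1}--\eqref{anti2} (necessity of the constraints is easy; sufficiency requires re-substituting). The subtle part of the isomorphism analysis will be ensuring that the degenerate family $AD_3^9(\alpha)$ is neither absorbed into $AD_3^8$ nor isomorphic to $AD_3^{10}$, and that the stated symmetry $(\alpha,\beta)\mapsto(-\beta,-\alpha)$ is the \emph{only} nontrivial identification — in other words, that distinct parameter values otherwise give non-isomorphic algebras, which one confirms by exhibiting an invariant (e.g. the rank or symmetry type of the bilinear map into $\langle e_3\rangle$) distinguishing the families.
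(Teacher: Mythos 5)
Your overall skeleton (write the general table for $\vtr$, express $\vtl$ via \eqref{cdot}, impose the identities, then reduce by change of basis) is the same as the paper's, but your first key claim is a non sequitur, and it is precisely the step on which everything else rests. You argue that because the associated associative algebra is $2$-nilpotent, the right-hand sides of \eqref{id2}, \eqref{id3}, \eqref{id5}--\eqref{id7} ``collapse to $0$''. Those right-hand sides are not triple $\cdot$-products: they are mixed products such as $(x\cdot y)\vtr z$ and $x\vtl (y\cdot z)$, i.e.\ scalar multiples of $e_3\vtr z$ and $x\vtl e_3$, and $2$-nilpotency of $\cdot$ says nothing about them. Your parenthetical justification that $e_3\vtr e_k$ and $e_k\vtr e_3$ ``feed into central, hence vanishing, expressions'' is circular: centrality of $e_3$ in $(A,\cdot)$ does \emph{not} automatically give centrality in $(A,\vtr,\vtl)$ --- this is exactly why Proposition \ref{compatible} carries that statement as a hypothesis to be verified. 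The paper closes this gap by evaluating \eqref{id5} on six specific triples to get $e_3\vtr e_3=e_3\vtl e_3=e_1\vtl e_3=e_3\vtr e_2=e_2\vtl e_3=e_3\vtr e_1=0$, then uses \eqref{cdot} to conclude $\langle e_3\rangle$ is central for $\vtr,\vtl$, and only then invokes Proposition \ref{compatible} together with the two-dimensional classification to obtain the normalized table with $\delta\in\{0,1\}$ (this last step also kills the $e_1$- and $e_2$-components of all products other than $e_1\vtr e_1$, $e_1\vtl e_1$ --- a point your sketch leaves entirely to ``bookkeeping'').

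Second, your anticipated division of labour between the identities and the isomorphism reduction is inverted in the $\delta=0$ case. Once $e_3$ is known to be central for $\vtr$ and $\vtl$, \emph{every} composite product occurring in \eqref{id1}--\eqref{id7} vanishes identically, so the identities impose no constraints whatsoever on the four coefficients $\alpha_1,\dots,\alpha_4$: the antisymmetric pattern of $AD_3^9(\alpha)$ is \emph{not} ``forced by \eqref{id2}''. It arises purely from the orbit analysis: when $\alpha_1=\alpha_4=0$ but $\alpha_2+\alpha_3\neq 0$, a basis change with $A_1A_2\neq0$ makes $\alpha_1'\neq0$, so such algebras are absorbed into $AD_3^8$; only the locus $\alpha_1=\alpha_4=0$, $\alpha_2+\alpha_3=0$ survives as the separate family $AD_3^9$. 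Carrying out your plan expecting \eqref{id2} to produce this equation, you would find nothing and risk listing a redundant, non-reduced family. (Your expectation for the case of a nonzero $e_2$-component is, by contrast, correct: there \eqref{id2} and \eqref{id6} do rigidly force $\alpha_2=0$, $\alpha_3=-1$, $\alpha_4=0$, yielding $AD_3^{10}$.) A smaller inaccuracy: the identification $AD_3^8(\alpha,\beta)\cong AD_3^8(-\beta,-\alpha)$ is not realized by the bare swap $e_1\leftrightarrow e_2$, which destroys the normalization $e_1\vtr e_1=e_3$, $e_2\vtr e_2=0$; the paper exhibits it via $e_1'=e_1$, $e_2'=-(\alpha+\beta)e_1+e_2$, $e_3'=e_3$, which is the second root of the equation $\alpha_4'=0$ in the transformation formulas.
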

\begin{proof}
By considering \eqref{id5} for the following triples
$$\{e_1,e_2,e_3\},\ \{e_3,e_1,e_2\},\ \{e_1,e_1,e_2\}, \ \{e_1,e_2,e_2\},\ \{e_2,e_1,e_2\},\ \{e_1,e_2,e_1\}$$ we get
$e_3\vtr e_3=0, \ e_3\vtl e_3=0, \ e_1\vtl e_3=0, \ e_3\vtr e_2=0, \ e_2\vtl e_3=0, \ e_3\vtr e_1=0$, respectively.

By $e_3e_i=e_3\vtr e_i+e_3\vtl e_i=0$ and $e_ie_3=e_i\vtr e_3+e_i\vtl e_3=0$ these imply $e_3\vtl e_i=0$ and $e_i\vtr e_3=0$. Then it is easy to see that $\langle e_3\rangle$ is the center of the algebras $(As_3^2, \cdot)$ and
 $(As_3^2, \vtr, \vtl)$.

According to Proposition \ref{compatible} one can write
\[
\left\{\begin{array}{llll}
e_1\vtr e_1=\delta e_2+\alpha_{1}e_3, &e_1\vtr e_2=\alpha_{2}e_3, & e_2\vtr e_1=\alpha_{3}e_3, & e_2\vtr e_2=\alpha_{4}e_3,\\[1mm]
e_1\vtl e_1=-\delta e_2-\alpha_{1}e_3, & e_1\vtl e_2=(1-\alpha_{2})e_3, & e_2\vtl e_1=(-1-\alpha_{3})e_3, & e_2\vtl e_2=-\alpha_{4}e_3,
\end{array}\right.
\]
where $\delta\in\{0,1\}$

\textbf{Case $\delta=0$.}  Let us consider the general change of the generators of basis:
\[e'_1=A_1e_1+A_2e_2+A_3e_3, \quad e'_2=B_1e_1+B_2e_2+B_3e_3,\]
and from the product $e'_1e'_2=e'_3$ we put $e'_3=(A_1B_2-A_2B_1)e_3,$ where $(A_1B_2-A_2B_1)^2\neq0.$

We express the new basis elements $\{e'_1, e'_2, e'_3\}$  via the basis elements $\{e_1, e_2, e_3\}.$  By verifying all the multiplications of the algebra in the new basis we obtain the relations between the parameters $\{\alpha'_{1}, \alpha'_{2}, \alpha'_{3}, \alpha'_{4}\}$ and $\{\alpha_{1}, \alpha_{2}, \alpha_{3}, \alpha_{4}\}$:
\begin{equation}\label{zamena1}
\begin{array}{l}
\alpha'_1=\frac{A_1^2\alpha_{1}+A_1A_2(\alpha_{2}+\alpha_{3})+A_2^2\alpha_{4}}{A_1B_2-A_2B_1},\\[1mm] \alpha'_2=\frac{A_1B_1\alpha_{1}+A_1B_2\alpha_{2}+A_2B_1\alpha_{3}+A_2B_2\alpha_{4}}{A_1B_2-A_2B_1},\\[1mm] \alpha'_3=\frac{A_1B_1\alpha_{1}+A_2B_1\alpha_{2}+A_1B_2\alpha_{3}+A_2B_2\alpha_{4}}{A_1B_2-A_2B_1},\\[1mm] \alpha'_4=\frac{B_1^2\alpha_{1}+B_1B_2(\alpha_{2}+\alpha_{3})+B_2^2\alpha_{4}}{A_1B_2-A_2B_1}.\end{array}
\end{equation}
Then we have the following cases.
\begin{itemize}

\item[1.] Let $(\alpha_1,\alpha_4)\neq(0,0)$. Then without loss of generality, we will assume that $\alpha_1\neq0$.  Next, choosing $B_1=\frac{-(\alpha_2+\alpha_3)\pm\sqrt{(\alpha_2+\alpha_3)^2-4\alpha_1\alpha_4}}{2\alpha_1}B_2$, we can put $\alpha'_4=0$. Thus we obtained the following algebra:
\[
\left\{\begin{array}{lll}
e_1\vtr e_1=\alpha_{1}e_3, &e_1\vtr e_2=\alpha_{2}e_3, & e_2\vtr e_1=\alpha_{3}e_3,\\[1mm]
e_1\vtl e_1=-\alpha_{1}e_3, & e_1\vtl e_2=(1-\alpha_{2})e_3, & e_2\vtl e_1=(-1-\alpha_{3})e_3. \\
\end{array}\right.
\]

Again, using the change of basis the expressions (\ref{zamena1}) will take the following form:
$$\begin{array}{l}
\alpha'_1=\frac{A_1^2\alpha_{1}+A_1A_2(\alpha_{2}+\alpha_{3})}{A_1B_2-A_2B_1},\\[1mm] \alpha'_2=\frac{A_1B_1\alpha_{1}+A_1B_2\alpha_{2}+A_2B_1\alpha_{3}}{A_1B_2-A_2B_1},\\[1mm] \alpha'_3=\frac{A_1B_1\alpha_{1}+A_2B_1\alpha_{2}+A_1B_2\alpha_{3}}{A_1B_2-A_2B_1},\\[1mm] 0=B_1^2\alpha_{1}+B_1B_2(\alpha_{2}+\alpha_{3}).\end{array}$$

\begin{enumerate}
  \item If $B_1=0,$ applying $B_2=A_1\alpha_1+A_2(\alpha_2+\alpha_3),$ we get  $\alpha'_1=1, \ \alpha'_2=\alpha_2, \ \alpha'_3=\alpha_3$ and derive the following anti-dendriform algebra $AD_3^8(\alpha,\beta).$
  \item If $B_1\neq0,$ then we have $B_1\alpha_{1}+B_2(\alpha_{2}+\alpha_{3})=0$, and putting  $B_1=-(\alpha_2+\alpha_3)A_1$ and $B_2=A_1\alpha_1$, we get $\alpha'_1=1, \ \alpha'_2=-\alpha_3, \ \alpha'_3=-\alpha_2$ and derive the following anti-dendriform algebra $AD_3^8(\alpha,\beta).$
\end{enumerate}



By the change of basis $ e'_1=e_1, e'_2=-(\alpha+\beta)e_1+e_2, \ e'_3=e_3$ we can show that the algebras  $AD_3^8(\alpha,\beta)$ and $AD_3^8(-\beta, -\alpha)$ are isomorphic.

\item[2.] Let $\alpha_{1}=\alpha_{4}=0$ and $\alpha_{2}+\alpha_{3}\neq0$. Then by choosing nonzero values of $A_1$ and $A_2$, we can assume $\alpha'_1\neq 0$ which is Case 1.

 \item[3.] Let $\alpha_{1}=\alpha_{4}=0$  and $\alpha_{2}+\alpha_{3}=0.$ Then we get $\alpha'_{1}=\alpha'_{4}=0,$ and  $\alpha'_{2}=-\alpha'_{3}=\alpha_{2}$ and derive the anti-dendriform algebra $AD_3^9(\alpha).$
\end{itemize}

\textbf{Case $\delta=1$.} We can write
\[
\left\{\begin{array}{llll}
e_1\vtr e_1=e_2+\alpha_{1}e_3, &e_1\vtr e_2=\alpha_{2}e_3, & e_2\vtr e_1=\alpha_{3}e_3, & e_2\vtr e_2=\alpha_{4}e_3,\\[1mm]
e_1\vtl e_1=-e_2-\alpha_{1}e_3, & e_1\vtl e_2=(1-\alpha_{2})e_3, & e_2\vtl e_1=(-1-\alpha_{3})e_3, & e_2\vtl e_2=-\alpha_{4}e_3.
\end{array}\right.
\]

Using \eqref{id2} for the triple $\{e_1,e_1,e_1\}$ and \eqref{id6} for the triple $\{e_1,e_1,e_1\}$,  $\{e_1,e_1,e_2\}$ we obtain $\alpha_2=0,$ $\alpha_3=-1,$ $\alpha_4=0$. Now we consider the basis change $e'_2=e_2+\alpha_1e_3$. Then we obtain the algebra $AD_3^{10}:$
$$e_1\vtr e_1=e_2, \ e_2\vtr e_1=-e_3, \ e_1\vtl e_1=-e_2, \ e_1\vtl e_2=e_3.$$ \end{proof}

Now, we consider the algebra $As_3^4$ with the multiplication $e_1e_2=e_3$.

\begin{thm}
Any three-dimensional complex anti-dendriform algebra associated to the algebra $As_3^4$ is isomorphic to the following pairwise  non-isomorphic algebras:

$AD_3^{11}(\alpha,\beta): \
e_1\vtr e_2=\alpha e_3,\
e_2\vtr e_1=\beta e_3,\
e_1\vtl e_2=(1-\alpha)e_3,\
e_2\vtl e_1=-\beta e_3;
$

$AD_3^{12}(\alpha,\beta): \ \begin{cases}
e_1\vtr e_2=\alpha e_3,\
e_2\vtr e_1=\beta e_3,\
e_2\vtr e_2=e_3,\\
e_1\vtl e_2=(1-\alpha)e_3,\
 e_2\vtl e_1=-\beta e_3,\
e_2\vtl e_2=-e_3;
\end{cases}$

$ AD_3^{13}(\alpha,\beta,\gamma): \ \begin{cases}
e_1\vtr e_1=e_3, \
e_1\vtr e_2=\alpha e_3,\
e_2\vtr e_1=\beta e_3,\
e_2\vtr e_2=\gamma e_3,\\
e_1\vtl e_1=-e_3,\
e_1\vtl e_2=(1-\alpha)e_3,\
e_2\vtl e_1=-\beta e_3,\
e_2\vtl e_2=-\gamma e_3;
\end{cases}$

$AD_3^{14}:\ e_1\vtr e_1=e_2, \ e_1\vtl e_1=-e_2, \ e_1\vtl e_2=e_3;$\\
where $\alpha, \beta, \gamma\in \mathbb{C}.$
\end{thm}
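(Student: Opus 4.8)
The plan is to mirror the strategy used for $As_3^2$: first pin down the center, then normalize the remaining structure constants by the (much smaller) automorphism group of $As_3^4$.

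First I would determine the center. Applying \eqref{id5} to the triple $(e_1,e_2,z)$ gives $e_3\vtr z=e_1\vtl(e_2\cdot z)=0$ for every $z$, while applying it to $(x,e_1,e_2)$ gives $x\vtl e_3=(x\cdot e_1)\vtr e_2=0$; combined with $e_3\cdot e_i=e_i\cdot e_3=0$ this forces every product involving $e_3$ to vanish, so $\langle e_3\rangle=Z_{As}(A)=Z_{AD}(A)$. By Proposition~\ref{compatible} the structure descends to the quotient $A/\langle e_3\rangle$, whose associated associative algebra is the two-dimensional abelian one; by Proposition~\ref{2-nilpotent} together with the classification of two-dimensional anti-dendriform algebras, this quotient structure is either trivial or the one with $\bar{e}\vtr\bar{e}=\bar{f}$, $\bar{e}\vtl\bar{e}=-\bar{f}$. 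Hence in $A$ all products lie in $\langle e_3\rangle$ except possibly a single square carrying an $e_2$-component. Writing $e_1\vtr e_1=\delta e_2+p e_3$, $e_1\vtr e_2=\alpha e_3$, $e_2\vtr e_1=\beta e_3$, $e_2\vtr e_2=\gamma e_3$ (with the $\vtl$-products determined by \eqref{cdot}) gives the general table, where $\delta\in\{0,1\}$.

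In the case $\delta=0$ every product has central image, so both \eqref{anti1} and \eqref{anti2} reduce to $0=0$ and impose no relation among $\alpha,\beta,\gamma,p$. The admissible changes of basis are the automorphisms of $As_3^4$, which I would compute first: they are exactly $e_1'=A_1e_1+A_3e_3$, $e_2'=B_2e_2+B_3e_3$, $e_3'=A_1B_2e_3$ with $A_1B_2\neq0$, and in particular they never mix $e_1$ and $e_2$. Tracking the constants through such a change yields $\alpha'=\alpha$, $\beta'=\beta$, $\gamma'=(B_2/A_1)\gamma$ and $p'=(A_1/B_2)p$. Thus $\alpha,\beta$ are invariants while $p$ and $\gamma$ can only be rescaled, and the three orbits $p=\gamma=0$, $p=0\neq\gamma$, $p\neq0$ give $AD_3^{11}(\alpha,\beta)$, $AD_3^{12}(\alpha,\beta)$ and $AD_3^{13}(\alpha,\beta,\gamma)$ respectively. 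In the case $\delta=1$ I would rescale to $\delta=1$ and then exploit that the cubic identities are now nontrivial: applying \eqref{id2} to $(e_1,e_1,e_1)$ gives $\alpha e_3=e_1\vtr(e_1\vtr e_1)=0$, and applying \eqref{id6} to $(e_1,e_1,e_1)$ and $(e_1,e_1,e_2)$ forces $\beta=0$ and $\gamma=0$; the surviving $pe_3$ in $e_1\vtr e_1$ is absorbed by the substitution $e_2'=e_2+pe_3$, producing $AD_3^{14}$. Pairwise non-isomorphism then follows from the invariants isolated above: membership of the nonzero square in $\langle e_3\rangle$ separates $AD_3^{14}$ from the rest, the vanishing of $e_1\vtr e_1$ and of $e_2\vtr e_2$ distinguish the first three families, and $\alpha,\beta,\gamma$ are genuine invariants.

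The step I expect to be the main obstacle is the case $\delta=1$. Because the automorphism group of $As_3^4$ fixes the splitting $A=\langle e_1\rangle\oplus\langle e_2\rangle$ modulo the center and never interchanges $e_1$ and $e_2$, one cannot simply assume that the non-central square sits on $e_1$; the generator of the derived space of the quotient could a priori be $\bar{e}_2$ rather than $\bar{e}_1$. Thus the delicate point is to use the cubic identities \eqref{id2}--\eqref{id7} carefully to show that, in each admissible position of the nonzero square, the remaining constants are pinned down to a single normal form, and to verify that this analysis does not produce a further algebra that fails to be isomorphic to $AD_3^{14}$. Getting this bookkeeping right, rather than the change-of-basis computations of the $\delta=0$ case, is where the real work lies.
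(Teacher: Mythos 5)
The parts of your outline that you actually carry out coincide with the paper's own proof: the same center computation via \eqref{id5} and \eqref{cdot}, the same passage to the quotient modulo $\langle e_3\rangle$ through Proposition \ref{compatible}, the same observation that the admissible basis changes are exactly $e_1'=A_1e_1+A_3e_3$, $e_2'=B_2e_2+B_3e_3$, $e_3'=A_1B_2e_3$ with $A_1B_2\neq 0$, the same three orbits $(p,\gamma)=(0,0)$, $p=0\neq\gamma$, $p\neq 0$ yielding $AD_3^{11}$, $AD_3^{12}$, $AD_3^{13}$, and the same treatment of $e_1\vtr e_1=e_2+pe_3$ by \eqref{id2}, \eqref{id6} and the shift $e_2'=e_2+pe_3$, yielding $AD_3^{14}$.

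However, the case you flag in your last paragraph and then postpone is a genuine gap, and it does not close the way you expect. Since, as you correctly note, the automorphisms of $As_3^4$ never mix $e_1$ and $e_2$, one must separately treat the quotient structure sitting on $\bar e_2$, i.e.\ $e_2\vtr e_2=\kappa e_1+\alpha_4e_3$ with $\kappa\neq0$, $e_1\vtr e_1=\alpha_1e_3$, $e_1\vtr e_2=\alpha_2e_3$, $e_2\vtr e_1=\alpha_3e_3$ (the remaining, mixed position of the square, with image on $\langle \bar e_1+t\bar e_2\rangle$, $t\neq 0$, really is killed by \eqref{id2} and \eqref{id6}). In this case the identities force only $\alpha_1=\alpha_3=0$ and $\alpha_2=1$, and the basis change $e_1'=\kappa e_1+\alpha_4e_3$, $e_3'=\kappa e_3$ normalizes the structure to
\[
e_1\vtr e_2=e_3,\qquad e_2\vtr e_2=e_1,\qquad e_2\vtl e_2=-e_1,
\]
with all other products zero. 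This is a compatible anti-dendriform structure on $As_3^4$: the sum of the operations is $e_1\cdot e_2=e_3$, and \eqref{anti1}, \eqref{anti2} hold because all products with $e_3$ vanish, $x\vtr e_1=x\vtl e_1=0$ and $e_1\vtl x=0$ for all $x$, so every composite occurring in these identities is identically zero. Yet it is isomorphic to none of $AD_3^{11}$--$AD_3^{14}$: an isomorphism of anti-dendriform algebras preserves the span of $\{x\vtr x:\ x\in A\}$, which here is two-dimensional (it contains $e_2\vtr e_2=e_1$ and $(e_1+e_2)\vtr(e_1+e_2)=e_1+e_3$), whereas it is at most one-dimensional in every algebra of the list; equivalently, $(e_2\vtr e_2)\vtr e_2=e_3\neq 0$ here, while all iterated $\vtr$-products vanish in $AD_3^{11}$--$AD_3^{14}$. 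So the deferred case does not get "pinned down to $AD_3^{14}$"; it produces a fifth isomorphism class, and your proposal cannot be completed into a proof of the statement as it stands. You should also be aware that the paper's own proof has exactly the same defect: immediately after invoking Proposition \ref{compatible} it writes $e_1\vtr e_1=\delta e_2+\alpha_1e_3$, $\delta\in\{0,1\}$, with all remaining products in $\langle e_3\rangle$, thereby silently normalizing the non-central square onto $e_1$ --- a normalization the automorphism group of $As_3^4$ does not permit --- so the algebra displayed above appears to be missing from the classification itself.
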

\begin{proof}
By considering \eqref{id5} for the triples $\{e_1,e_2,e_3\}$, $\{e_1,e_2,e_\}$, $\{e_1,e_2,e_1\}$, $\{e_2,e_1,e_2\}$, $\{e_1,e_1,e_2\}$ we get
$e_3\vtr e_3=0, \ \ e_3\vtr e_2=0, \ \ e_3\vtr e_1=0, \ \ e_2\vtl e_3=0, \ \ e_1\vtl e_3=0$, respectively. By \eqref{cdot} one can get $e_3\vtl e_3=0, \ \ e_3\vtl e_2=0, \ \ e_3\vtl e_1=0, \ \ e_2\vtr e_3=0, \ \ e_1\vtr e_3=0$.

By $e_3e_i=e_3\vtr e_i+e_3\vtl e_i=0$ and $e_ie_3=e_i\vtr e_3+e_i\vtl e_3=0$ these imply $e_3\vtl e_i=0$ and $e_i\vtr e_3=0$. Then it is easy to see that $\langle e_3\rangle$ is the center of the algebras $(As_3^4, \cdot)$ and
 $(As_3^4, \vtr, \vtl)$.

According to Proposition \ref{compatible} we can write
\[
\begin{cases}
e_1\vtr e_1=\delta e_2+\alpha_1e_3, \
e_1\vtr e_2=\alpha_2e_3,\
e_2\vtr e_1=\alpha_3e_3,\
e_2\vtr e_2=\alpha_4e_3,\\
e_1\vtl e_1=-\delta e_2-\alpha_1e_3,\
e_1\vtl e_2=(1-\alpha_2)e_3,\
e_2\vtl e_1=-\alpha_3e_3,\
e_2\vtl e_2=-\alpha_4e_3,
\end{cases}
\]
where $\delta\in\{0,1\}$.

\textbf{Case $\delta=0$.} Let us consider the general change of the generators of basis:
\[e'_1=A_1e_1+A_2e_2+A_3e_3, \quad e'_2=B_1e_1+B_2e_2+B_3e_3,\]
and from the products $e'_1e'_2=e'_3, \ e'_1e'_1=0, \ e'_2e'_2=0,$ we put $e'_3=A_1B_2e_3, \ A_1A_2=0, \ B_1B_2=0,$ where $A_1B_2(A_1B_2-A_2B_1)\neq0.$ This implies $A_2=B_1=0$ and $A_1B_2\neq0$.

We express the new basis elements $\{e'_1, e'_2, e'_3\}$  via the basis elements $\{e_1, e_2, e_3\}.$  By verifying all the multiplications of the algebra in the new basis we obtain the relations between the parameters $\{\alpha'_{1}, \alpha'_{2}, \alpha'_{3}, \alpha'_{4}\}$ and $\{\alpha_{1}, \alpha_{2}, \alpha_{3}, \alpha_{4}\}$:
$$
\alpha'_1=\frac{A_1\alpha_{1}}{B_2},\ \alpha'_2=\alpha_{2},\
\alpha'_3=\alpha_{3},\
\alpha'_4=\frac{B_2\alpha_{4}}{A_1}$$
with condition $A_2=0$.
Then we have the following cases:

\begin{enumerate}
  \item If $\alpha_1=\alpha_4=0$. Then we have the algebra $AD_3^{11}(\alpha,\beta)$.

\item If $\alpha_1=0$ and $\alpha_4\neq0$. Then by choosing $A_1=B_2\alpha_4$ we can obtain the algebra $AD_3^{12}(\alpha,\beta)$.

 \item If $\alpha_1\neq0$. Then putting $B_2=A_1\alpha_1$ we get the algebra $AD_3^{13}(\alpha,\beta,\gamma)$.

\end{enumerate}

\textbf{Case $\delta=1$.} We can write
\[
\begin{cases}
e_1\vtr e_1=e_2+\alpha_1e_3, \
e_1\vtr e_2=\alpha_2e_3,\
e_2\vtr e_1=\alpha_3e_3,\
e_2\vtr e_2=\alpha_4e_3,\\
e_1\vtl e_1=-e_2-\alpha_1e_3,\
e_1\vtl e_2=(1-\alpha_2)e_3,\
e_2\vtl e_1=-\alpha_3e_3,\
e_2\vtl e_2=-\alpha_4e_3.
\end{cases}
\]
Using \eqref{id2} for the triples $\{e_1,e_1,e_1\}$, $\{e_2,e_1,e_1\}$ and \eqref{id6} for the triple $\{e_1,e_1,e_1\}$ we obtain $\alpha_2=\alpha_3=\alpha_4=0$. Finally, by considering the basis change $e'_2=e_2+\alpha_1e_3$, we obtain the algebra $AD_3^{14}$. \end{proof}

Let us consider the algebra $As_3^5(\lambda)$ with the multiplication
\[ e_1e_1=e_3, \ e_1e_2=\lambda e_3, \  e_2e_2=e_3. \]

\begin{thm}
Any three-dimensional complex anti-dendriform algebra associated to the algebra $As_3^5(\lambda)$
is isomorphic to the following pairwise non-isomorphic algebras:

$AD_3^{15}(\alpha,\beta,\gamma,\lambda):\ \begin{cases}
e_1\vtr e_1=\alpha e_3,\
e_1\vtr e_2=\beta e_3,\
e_2\vtr e_1=\gamma e_3, \  \alpha\neq 0,\\
e_1\vtl e_1=(1-\alpha)e_3,\
e_1\vtl e_2=(\lambda-\beta)e_3,\
e_2\vtl e_1=-\gamma e_3,\
e_2\vtl e_2=e_3;
\end{cases}$

$AD_3^{16}(\alpha, \lambda):\ \begin{cases}
e_1\vtr e_2=\alpha e_3,\
e_2\vtr e_1=-\alpha e_3,\\
e_1\vtl e_1=e_3,\
e_1\vtl e_2=(\lambda-\alpha)e_3,\
e_2\vtl e_1=-\alpha e_3,\
e_2\vtl e_2=e_3;
\end{cases}$

$AD_3^{17}(\lambda): \ e_1\vtr e_1=e_2,\
e_1\vtl e_1=-e_2+e_3,\
e_1\vtl e_2=\lambda e_3,\ e_2\vtl e_2=e_3;
$\\[1mm]
where  $\alpha, \beta, \gamma, \lambda\in \mathbb{C}$ and $AD_3^{15}(\alpha,\beta,\gamma,0)\cong AD_3^{15}(\alpha,-\beta,-\gamma,0).$

\end{thm}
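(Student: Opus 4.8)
The plan is to reuse the strategy that worked for $As_3^2$ and $As_3^4$: first collapse the center, then split according to the anti-dendriform structure induced on the quotient. I would begin by checking that $\langle e_3\rangle$ is at once the center of $(As_3^5(\lambda),\cdot)$ and of $(As_3^5(\lambda),\vtr,\vtl)$. Feeding \eqref{id5} the triples $(e_2,e_2,e_1)$, $(e_1,e_1,e_1)$, $(e_1,e_1,e_2)$, $(e_2,e_2,e_2)$ and $(e_1,e_1,e_3)$ forces $e_3\vtr e_1=e_3\vtr e_2=e_3\vtr e_3=0$ and $e_1\vtl e_3=e_2\vtl e_3=0$; the remaining products involving $e_3$ then vanish via \eqref{cdot} and $e_3\cdot e_i=e_i\cdot e_3=0$. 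Once $\langle e_3\rangle$ is central in both structures, Proposition \ref{compatible} lets me pass to $A/\langle e_3\rangle$, whose associated associative algebra is the abelian two-dimensional one. By Proposition \ref{2-nilpotent} the quotient is $2$-nilpotent, hence by the classification of two-dimensional anti-dendriform algebras it is either trivial or $AD_2^3(-1)$. This dichotomy yields a general multiplication table governed by a flag $\delta\in\{0,1\}$ (the $e_2$-component of $e_1\vtr e_1$, after adapting the basis so the distinguished generator is $e_1$) together with four scalars $\alpha,\beta,\gamma,\eta$ recording the $e_3$-coefficients of $e_1\vtr e_1,\ e_1\vtr e_2,\ e_2\vtr e_1,\ e_2\vtr e_2$, the $\vtl$-products being fixed by \eqref{cdot}.

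When $\delta=0$ every product lands in the central line $\langle e_3\rangle$, so each instance of \eqref{anti1}--\eqref{anti2} collapses to $0=0$ and holds automatically; the whole classification is therefore carried out by the automorphism group of $(A,\cdot)$. Taking $e_1'=A_1e_1+A_2e_2$, $e_2'=B_1e_1+B_2e_2$ modulo $e_3$ and demanding that $e_1e_1=e_3$, $e_1e_2=\lambda e_3$, $e_2e_2=e_3$, $e_2e_1=0$ be preserved, I would compute that $\alpha'$ and $\eta'$ are both values of the single binary quadratic form $Q(x,y)=\alpha x^2+(\beta+\gamma)xy+\eta y^2$, namely $\alpha'=Q(A_1,A_2)/c$ and $\eta'=Q(B_1,B_2)/c$ with $c=A_1^2+\lambda A_1A_2+A_2^2$. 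Then I split on $Q$: if $Q\not\equiv0$ I first pick $(A_1,A_2)$ with $Q(A_1,A_2)\neq0$ so that $\alpha\neq0$, and then pick $(B_1,B_2)$ on a nontrivial zero of $Q$ to force $\eta=0$, arriving at $AD_3^{15}(\alpha,\beta,\gamma,\lambda)$; if $Q\equiv0$, i.e. $\alpha=\eta=0$ and $\gamma=-\beta$, the table is already $AD_3^{16}(\alpha,\lambda)$. The extra automorphism $e_2\mapsto-e_2$, which preserves the associative table only for $\lambda=0$, produces the stated isomorphism $AD_3^{15}(\alpha,\beta,\gamma,0)\cong AD_3^{15}(\alpha,-\beta,-\gamma,0)$.

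When $\delta=1$ the product $e_1\vtr e_1=e_2+\alpha e_3$ is non-central, so the defining identities are no longer vacuous. Evaluating \eqref{id2} on $(e_1,e_1,e_1)$ gives $\beta=0$, \eqref{id4} on $(e_1,e_1,e_1)$ gives $\beta=\gamma$ and hence $\gamma=0$, while \eqref{id2} on $(e_2,e_1,e_1)$ and \eqref{id4} on $(e_1,e_1,e_2)$ both constrain the coefficient $\eta$ of $e_2\vtr e_2$. After these reductions a central shift $e_2'=e_2+\alpha e_3$ would absorb $\alpha$ and leave $AD_3^{17}(\lambda)$. To finish I would separate the three families by invariants --- the value of $\delta$, and whether $Q\equiv0$ --- and compare the remaining parameters to establish the claimed mutual non-isomorphism.

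I expect the $\delta=1$ analysis to be the crux. The two determinations of $\eta$ coming from \eqref{id2} on $(e_2,e_1,e_1)$ and from \eqref{id4} on $(e_1,e_1,e_2)$ must be reconciled, and one has to verify that the surviving table really satisfies every instance of \eqref{anti1}--\eqref{anti2} before asserting the normal form $AD_3^{17}(\lambda)$; this consistency check is the delicate step. A lesser, but still real, technical point is ensuring in the $\delta=0$ case that the nontrivial zero of $Q$ used to normalize $\eta$ is attained by a basis change compatible with the constraints $c=A_1^2+\lambda A_1A_2+A_2^2$ and $A_1B_1+\lambda A_2B_1+A_2B_2=0$ imposed by the associative structure.
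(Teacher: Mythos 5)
Your proposal retraces the paper's own proof: the same center collapse via \eqref{id5}, the same appeal to Proposition \ref{compatible}, the same flag $\delta\in\{0,1\}$, the same orbit analysis for $\delta=0$ and identity reductions for $\delta=1$. But the step you flagged as the crux and then deferred is exactly where the argument breaks, and your own suspicion about the two determinations of $\eta$ is correct: they cannot be reconciled. In the $\delta=1$ table one has $e_2\vtl e_2=(1-\eta)e_3$, because $e_2\cdot e_2=e_3\neq 0$ in $As_3^5(\lambda)$ (unlike $As_3^2$ and $As_3^4$, where $e_2\cdot e_2=0$ and the analogous step is harmless). Then \eqref{id2} on $(e_2,e_1,e_1)$ forces $\eta=0$, while \eqref{id4} on $(e_1,e_1,e_2)$ forces
\begin{equation*}
0=e_1\vtr(e_1\vtr e_2)=(e_1\vtl e_1)\vtl e_2=-(e_2\vtl e_2)=-(1-\eta)e_3,
\end{equation*}
i.e.\ $\eta=1$. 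Hence the $\delta=1$ branch is empty, and the claimed normal form $AD_3^{17}(\lambda)$ is not an anti-dendriform algebra at all: for instance \eqref{anti2} fails, since $(e_1\vtr e_1)\vtl e_2=e_2\vtl e_2=e_3$ while $e_1\vtr(e_1\vtl e_2)=\lambda\, e_1\vtr e_3=0$. The paper commits the same error (it checks only three instances of the identities and never the ones above), so agreement with the paper is no defense here.

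The deeper problem is that the $\delta$-dichotomy itself --- your ``after adapting the basis so the distinguished generator is $e_1$'' --- is an impossible normalization. If the quotient structure is nontrivial, write $f_1\vtr f_1=v+ce_3$ with $v\notin\langle e_3\rangle$; then \eqref{id2} on $(v,f_1,f_1)$ gives $v\vtr v=0$, and \eqref{id7} on $(f_1,f_1,v)$ gives $0=(f_1\vtl f_1)\vtl v=-v\vtl v=-v\cdot v$. So the image line of the quotient structure must be isotropic for the quadratic form $x_1^2+\lambda x_1x_2+x_2^2$, whereas the $\delta=1$ normal form places it on $e_2$, which is never isotropic. The isotropic case genuinely occurs and is missing from the theorem: for $\lambda=0$ set $v=e_1+ie_2$ (so $v\cdot v=0$) and
\begin{equation*}
e_1\vtr e_1=v,\qquad e_1\vtr e_2=iv,\qquad e_2\vtr e_1=iv-ie_3,\qquad e_2\vtr e_2=-v+e_3,
\end{equation*}
with $\vtl$ defined by \eqref{cdot} and all products involving $e_3$ equal to zero. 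Every $\vtr$- and $\vtl$-product lies in $\langle v,e_3\rangle$ and $x\vtr v=v\vtl x=0$ for all $x$, so every term of \eqref{anti1} and \eqref{anti2} vanishes identically; this is therefore a compatible anti-dendriform structure on $As_3^5(0)$, and it is isomorphic to none of $AD_3^{15}$, $AD_3^{16}$ (whose products all lie in the center, an isomorphism invariant) nor to the non-algebra $AD_3^{17}$. So the gap is genuine, it is shared by the paper, and closing it requires replacing the $\delta=1$ branch by a classification of the nontrivial-quotient structures with isotropic image; as written, both your proposal and the theorem's list are incorrect.
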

\begin{proof}
By considering \eqref{id5} for the triples $$\{e_2,e_2,e_3\}, \{e_2,e_2,e_1\}, \ \{e_1,e_1,e_1\}, \  \{e_1,e_2,e_2\},\ \{e_2,e_2,e_2\}$$ we get
$e_3\vtr e_3=e_3\vtr e_1=e_1\vtl e_3=e_3\vtr e_2=e_2\vtl e_3=0$, respectively.

By $e_3e_i=e_3\vtr e_i+e_3\vtl e_i=0$ and $e_ie_3=e_i\vtr e_3+e_i\vtl e_3=0$ these imply $e_3\vtl e_i=0$ and $e_i\vtr e_3=0$. Then it is easy to see that $\langle e_3\rangle$ is the center of the algebras $(As_3^4, \cdot)$ and
 $(As_3^4, \vtr, \vtl)$.

According to Proposition \ref{compatible} we can write
\[
\begin{cases}
e_1\vtr e_1=\delta e_2+\alpha_1e_3,\
e_1\vtr e_2=\alpha_2e_3,\
e_2\vtr e_1=\alpha_3e_3,\
e_2\vtr e_2=\alpha_4e_3,\\
e_1\vtl e_1=-\delta e_2+(1-\alpha_1)e_3,\
e_1\vtl e_2=(\lambda-\alpha_2)e_3,\
e_2\vtl e_1=-\alpha_3e_3,\
e_2\vtl e_2=(1-\alpha_4)e_3,
\end{cases}
\]
where $\delta\in\{0,1\}$.

\textbf{Case $\delta=0$.} Let us consider the general change of the generators of basis:
\[e'_1=A_1e_1+A_2e_2+A_3e_3, \  e'_2=B_1e_1+B_2e_2+B_3e_3,\]
and from the products $e'_1e'_2=\lambda e'_3, \ e'_1e'_1=e'_3, \ e'_2e'_2=e'_3,\ e'_2e'_1=0,$ we put
$$e'_3=(A_1^2+\lambda A_1A_2+A_2^2)e_3, \ A_1B_2-A_2B_1\neq0, \ A_1B_1+A_2B_2=-\lambda A_2B_1,$$
$$A_1^2+\lambda A_1A_2+A_2^2=B_1^2+\lambda B_1B_2+B_2^2, \  \lambda (A_1^2+\lambda A_1A_2+A_2^2)=\lambda(A_1B_2-A_2B_1).$$

We express the new basis elements $\{e'_1, e'_2, e'_3\}$  via the basis elements $\{e_1, e_2, e_3\}.$  By verifying all the multiplications of the algebra in the new basis we obtain the relations between the parameters $\{\alpha'_{1}, \alpha'_{2}, \alpha'_{3}, \alpha'_{4}\}$ and $\{\alpha_{1}, \alpha_{2}, \alpha_{3}, \alpha_{4}\}$.

$$
\alpha'_1=\frac{A_1^2\alpha_{1}+A_1A_2(\alpha_2+\alpha_3)+A_2^2\alpha_4}{A_1^2+\lambda A_1A_2+A_2^2},\
\alpha'_2=\frac{A_1B_1\alpha_{1}+A_1B_2\alpha_2+A_2B_1\alpha_3+A_2B_2\alpha_4}{A_1^2+\lambda A_1A_2+A_2^2},$$
$$\alpha'_3=\frac{A_1B_1\alpha_{1}+A_2B_1\alpha_2+A_1B_2\alpha_3+A_2B_2\alpha_4}{A_1^2+\lambda A_1A_2+A_2^2},\  \alpha'_4=\frac{B_1^2\alpha_{1}+B_1B_2(\alpha_2+\alpha_3)+B_2^2\alpha_4}{A_1^2+\lambda A_1A_2+A_2^2}.
$$

Then we have the following cases.
\begin{itemize}

\item[1.] Let $(\alpha_1,\alpha_4)\neq0$. Without loss of generality, we assume that $\alpha_1\neq0$.  Next, choosing $B_1=\frac{-(\alpha_2+\alpha_3)\pm\sqrt{(\alpha_2+\alpha_3)^2-4\alpha_1\alpha_4}}{2\alpha_1}B_2$, we can put $\alpha'_4=0$. Thus we obtained the following algebra:
\[
\left\{\begin{array}{lll}
e_1\vtr e_1=\alpha_{1}e_3, &e_1\vtr e_2=\alpha_{2}e_3, & e_2\vtr e_1=\alpha_{3}e_3,\\[1mm]
e_1\vtl e_1=-\alpha_{1}e_3, & e_1\vtl e_2=(1-\alpha_{2})e_3, & e_2\vtl e_1=(-1-\alpha_{3})e_3. \\
\end{array}\right.
\]
Again by using a change of basis we obtain the following relations:
$$
\alpha'_1=\frac{A_1^2\alpha_{1}+A_1A_2(\alpha_2+\alpha_3)}{A_1^2+\lambda A_1A_2+A_2^2},\
\alpha'_2=\frac{A_1B_1\alpha_{1}+A_1B_2\alpha_2+A_2B_1\alpha_3}{A_1^2+\lambda A_1A_2+A_2^2},$$
$$
\alpha'_3=\frac{A_1B_1\alpha_{1}+A_2B_1\alpha_2+A_1B_2\alpha_3}{A_1^2+\lambda A_1A_2+A_2^2},\
B_1^2\alpha_{1}+B_1B_2(\alpha_2+\alpha_3)=0.
$$

\begin{enumerate}
  \item If $B_1=0.$ Then we have $A_1B_2\neq 0,\ A_2=0, \ \lambda (A_1-B_2)=0, \ (A_1-B_2)(A_1+B_2)=0$ and
$$\alpha'_1=\alpha_{1},\  \alpha'_2=\frac{B_2\alpha_2}{A_1},\  \alpha'_3=\frac{B_2\alpha_3}{A_1},$$
 we derive the anti-dendriform algebra $AD_3^{15}(\alpha,\beta,\gamma,\lambda)_{\alpha\neq0}.$ If $\lambda=0$ then $AD_3^{15}(\alpha,\beta,\gamma,0)_{\alpha\neq0}\cong AD_3^{15}(\alpha,-\beta, -\gamma,0)_{\alpha\neq0}.$

  \item If $B_1\neq0.$ Then we get $B_1=-\frac{B_2(\alpha_{2}+\alpha_{3})}{\alpha_1}\neq0$.
  \begin{enumerate}
    \item $\lambda=0.$ Then $A_1=\frac{A_2\alpha_1}{\alpha_2+\alpha_3},\ A_2B_2^2(\alpha_1^2+(\alpha_2+\alpha_3)^2)\neq0, \ B_2^2=\frac{A_2^2\alpha_1^2}{(\alpha_{2}+\alpha_{3})^2}$ and
    $$
\alpha'_1=\alpha_1,\ \alpha'_2=-\frac{B_2(\alpha_2+\alpha_3)\alpha_3}{A_2\alpha_1},\ \alpha'_3=-\frac{B_2(\alpha_2+\alpha_3)\alpha_2}{A_2\alpha_1}.$$
Thus, we have the anti-dendriform algebra $AD_3^{15}(\alpha,\beta, \gamma,0)_{\alpha\neq0, \beta+\gamma\neq0}.$
    \item $\lambda\neq0.$ Then we obtain $A_1=\frac{A_2(\alpha_1-\lambda(\alpha_2+\alpha_3))}{\alpha_2+\alpha_3},$ $A_2(\alpha_1^2-\lambda\alpha_1(\alpha_2+\alpha_3)+(\alpha_2+\alpha_3)^2)\neq0, \ B_2^2=\frac{A_2^2\alpha_1^2}{(\alpha_2+\alpha_3)^2}$ and
    $$
\alpha'_1=\alpha_1-\lambda(\alpha_2+\alpha_3),\
\alpha'_2=-\frac{B_2(\alpha_2+\alpha_3)\alpha_3}{A_2\alpha_1},\
\alpha'_3=-\frac{B_2(\alpha_2+\alpha_3)\alpha_2}{A_2\alpha_1}.$$
   In this case we derive the anti-dendriform algebra $AD_3^{15}(\alpha,\beta,\gamma,\lambda)_{\alpha\neq0, \beta+\gamma\neq0}.$
  \end{enumerate}
\end{enumerate}

\item[2.] Let $\alpha_{1}=\alpha_{4}=0$ and $\alpha_{2}+\alpha_{3}\neq0$. By choosing nonzero values of $A_1$ and $A_2$, we can assume $\alpha'_1\neq 0$ which is Case 1.

 \item[3.] Let $\alpha_{1}=\alpha_{4}=0$  and $\alpha_{2}+\alpha_{3}=0.$ Then
 \begin{enumerate}
   \item If $\lambda\neq0$ we have $AD_3^{16}(\alpha, \lambda)$.
   \item If $\lambda=0$ we have $A_1B_1+A_2B_2=0$ and $A_1^2+A_2^2=B_1^2+B_2^2.$ Thus we obtian $AD_3^{16}(\alpha, 0).$
 \end{enumerate}
\end{itemize}

\textbf{Case $\delta=1$.} We can write
\[
\begin{cases}
e_1\vtr e_1=e_2+\alpha_1e_3,\
e_1\vtr e_2=\alpha_2e_3,\
e_2\vtr e_1=\alpha_3e_3,\
e_2\vtr e_2=\alpha_4e_3,\\
e_1\vtl e_1=-e_2+(1-\alpha_1)e_3,\
e_1\vtl e_2=(\lambda-\alpha_2)e_3,\
e_2\vtl e_1=-\alpha_3e_3,\
e_2\vtl e_2=(1-\alpha_4)e_3.
\end{cases}
\]
Using \eqref{id2} for the triples $\{e_1,e_1,e_1\}$, $\{e_2,e_1,e_1\}$ and \eqref{id6} for the triple $\{e_1,e_1,e_1\}$ we obtain $\alpha_2=\alpha_3=\alpha_4=0$. Further, by changing of basis $e'_2=e_2+\alpha_1e_3$ we obtain the algebra $AD_3^{17}(\lambda).$ \end{proof}

Finally we explore classification of three-dimensional anti-dendriform associated to the algebra $As_3^3$.

\begin{thm}
Any three-dimensional complex anti-dendriform algebra associated to $As^3_3$ is isomorphic to one of the following pairwise non-isomorphic algebras:

$AD_3^{18}(\alpha): \ e_1\vtr e_1=\alpha e_3, \ e_1\vtl e_1=(1-\alpha)e_3;$

$AD_3^{19}: \ e_2\vtr e_1=e_3,\ e_1\vtl e_1=e_3, \ e_2\vtl e_1=-e_3;$

$AD_3^{20}(\alpha): \ \begin{cases}
e_1\vtr e_1=\alpha e_3, \ e_1\vtr e_2=e_3, \ e_2\vtr e_1=-e_3,\\
e_1\vtl e_1=(1-\alpha)e_3, \
e_1\vtl e_2=-e_3, \
e_2\vtl e_1=e_3;
\end{cases}$

$AD_3^{21}(\alpha):\  \begin{cases}
e_1\vtr e_2=e_3, \ e_2\vtr e_1=\alpha e_3,\\
e_1\vtl e_1=e_3, \
e_1\vtl e_2=-e_3, \
e_2\vtl e_1=-\alpha e_3, \ \alpha\neq-1;
\end{cases}$

$AD_3^{22}(\alpha,\beta): \ \begin{cases}
e_1\vtr e_1=\alpha e_3,  \ e_2\vtr e_1=\beta e_3, \  e_2\vtr e_2=e_3,\\
e_1\vtl e_1=(1-\alpha)e_3, \
e_2\vtl e_1=-\beta e_3, \
e_2\vtl e_2=-e_3;
\end{cases}$

$AD_3^{23}:\ e_1\vtr e_1=e_2,\ e_1\vtl e_1=-e_2+e_3;$\\
where $AD_3^{21}(-1)\cong AD_3^{20}(0).$

\end{thm}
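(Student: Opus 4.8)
The plan is to classify compatible anti-dendriform structures on $As_3^3$ (with the single nonzero product $e_1\cdot e_1=e_3$) by first pinning down where the structure constants are forced to live, then splitting into cases according to whether the ``leading'' product $e_1\vtr e_1$ has an $e_2$-component. First I would exploit identity \eqref{id5}, which says $(x\cdot y)\vtr z=x\vtl(y\cdot z)$, on suitable triples involving $e_2$ and $e_3$. Since the only nonzero associative product is $e_1\cdot e_1=e_3$, testing triples such as $\{e_1,e_1,e_j\}$, $\{e_j,e_1,e_1\}$, and triples whose middle or outer entries annihilate forces many $\vtr$-products that land in $e_3$ to vanish, and in particular should show that $e_3$ is central for both $(As_3^3,\cdot)$ and $(As_3^3,\vtr,\vtl)$. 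Once $\langle e_3\rangle$ is identified as the common center, Proposition~\ref{compatible} lets me pass to a normal form in which, writing $e_i\vtr e_j=\sum\alpha\,e_3$ (plus a possible $e_2$-term in $e_1\vtr e_1$), the $\vtl$-products are determined by $x\vtl y=x\cdot y-x\vtr y$; concretely $e_1\vtl e_1=(1-\alpha_1)e_3$ and $e_i\vtl e_j=-(e_i\vtr e_j)$ off the diagonal, so the whole structure depends on finitely many scalars together with a binary choice $\delta\in\{0,1\}$ recording whether $e_1\vtr e_1$ has an $e_2$-part.

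Next I would split on $\delta$. In the case $\delta=0$ (no $e_2$-component, so the image of every product lies in $\langle e_3\rangle$), the remaining anti-dendriform identities \eqref{id1}--\eqref{id7} become relations among the four scalars coming from $e_1\vtr e_1$, $e_1\vtr e_2$, $e_2\vtr e_1$, $e_2\vtr e_2$. I expect the associativity-splitting identities, evaluated on triples built from $e_1,e_2$, to cut this four-parameter family down and then the classification is completed by running the admissible change of basis that preserves the relation $e_1\cdot e_1=e_3$ (i.e. basis changes $e_1'=A_1e_1+A_2e_2+A_3e_3$, $e_2'=B_1e_1+B_2e_2+B_3e_3$ with $e_3'$ forced by $e_1'\cdot e_1'=e_3'$, subject to $A_1^2\neq0$ and the vanishing of the other associative products in the new basis). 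Tracking how $(\alpha_1,\alpha_2,\alpha_3,\alpha_4)$ transform, I would reduce to the canonical representatives $AD_3^{18}(\alpha)$, $AD_3^{20}(\alpha)$, $AD_3^{21}(\alpha)$, and $AD_3^{22}(\alpha,\beta)$ according to which of the leading coefficients can be normalized to $0$ or $1$, mirroring the subcase bookkeeping used for $As_3^2$, $As_3^4$, and $As_3^5(\lambda)$ in the preceding theorems.

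In the case $\delta=1$, where $e_1\vtr e_1=e_2+\alpha_1 e_3$ and $e_1\vtl e_1=-e_2+(1-\alpha_1)e_3$, the $e_2$-component propagates through the nonassociative identities: I would apply \eqref{id2} on $\{e_1,e_1,e_1\}$ and on a second triple such as $\{e_2,e_1,e_1\}$, together with \eqref{id6} on $\{e_1,e_1,e_1\}$, to force the off-diagonal structure constants $\alpha_2,\alpha_3,\alpha_4$ to specific values (I expect them to all vanish, as happened for $As_3^4$ and $As_3^5(\lambda)$). A final normalizing substitution $e_2'=e_2+\alpha_1 e_3$ should absorb the $\alpha_1$ and produce the single algebra $AD_3^{23}$. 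The last loose end is the isomorphism $AD_3^{21}(-1)\cong AD_3^{20}(0)$: I would exhibit an explicit generator change identifying these two representatives, which also explains the exclusion $\alpha\neq-1$ in $AD_3^{21}$.

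The main obstacle I anticipate is not any single identity but the combinatorial control of the $\delta=0$ change-of-basis reduction: because the admissible transformations must simultaneously preserve $e_1\cdot e_1=e_3$ and kill $e_1\cdot e_2$, $e_2\cdot e_1$, $e_2\cdot e_2$, the parameter group acting on $(\alpha_1,\alpha_2,\alpha_3,\alpha_4)$ is more constrained than in the $As_3^2$ case, and verifying that the listed representatives are genuinely pairwise non-isomorphic (rather than merely distinct normal forms) requires checking that no residual basis change links them. Ensuring the five families $AD_3^{18}$--$AD_3^{22}$ exhaust the $\delta=0$ orbits without overlap, and correctly flagging the one coincidence $AD_3^{21}(-1)\cong AD_3^{20}(0)$, is where the care is needed.
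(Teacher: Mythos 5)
Your overall architecture matches the paper's proof (centrality of $\langle e_3\rangle$, passage through Proposition \ref{compatible}, the split $\delta\in\{0,1\}$, basis-change reduction for $\delta=0$, and \eqref{id2}, \eqref{id6} plus the substitution $e_2'=e_2+\alpha_1e_3$ for $\delta=1$), but there is a genuine gap at the very first step, and it is exactly the step that makes $As_3^3$ harder than $As_3^2$, $As_3^4$ and $As_3^5(\lambda)$: identity \eqref{id5} cannot show that $e_3$ is central for the anti-dendriform structure. Since the only nonzero associative product is $e_1\cdot e_1=e_3$, the only nontrivial instances of \eqref{id5} are $(e_1\cdot e_1)\vtr z=e_1\vtl(e_1\cdot z)$ and $(x\cdot e_1)\vtr e_1=x\vtl(e_1\cdot e_1)$; these yield $e_3\vtr e_2=e_3\vtr e_3=e_2\vtl e_3=e_3\vtl e_3=0$ together with the relation $e_3\vtr e_1=e_1\vtl e_3$, but they do \emph{not} force $e_3\vtr e_1=0$. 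The vanishing associative products $e_3\cdot e_1=e_1\cdot e_3=0$ only give $e_3\vtl e_1=-e_3\vtr e_1$ and $e_1\vtr e_3=-e_1\vtl e_3$, so a priori $e_3\vtr e_1=e_1\vtl e_3=\sum_k\alpha_{3,1}^ke_k$ can be any vector. The paper spends the first half of its proof eliminating this possibility: it first gets $\alpha_{3,1}^1=0$, then splits on $\alpha_{3,1}^2\neq0$ versus $\alpha_{3,1}^2=0$, derives a contradiction in the first case by combining consequences of \eqref{id2}, \eqref{id4} and \eqref{id6} (e.g. $e_2\vtr(e_2\vtr e_1)=0$, $(e_1\vtl e_2)\vtl e_2=0$, $e_1\vtr(e_1\vtr e_1)=-e_3\vtr e_1$, and $e_3\vtl e_1=0$ from \eqref{id4}, \eqref{id6}), and only then concludes $\alpha_{3,1}^3=0$, hence $e_3\vtr e_1=0$. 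Without this argument you cannot invoke Proposition \ref{compatible}, and the normal form you write down (all products landing in $\langle e_2, e_3\rangle$ with $\vtl$ determined by $\vtr$ and scalars $\alpha_1,\dots,\alpha_4$) is unjustified.

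Two smaller points. First, in the case $\delta=0$ you expect the identities \eqref{id1}--\eqref{id7} to cut the four-parameter family down; in fact, once $e_3$ is known to be central and $\delta=0$, every product lies in $\langle e_3\rangle$ and every triple product vanishes, so all identities hold vacuously and the entire reduction is carried out by the admissible basis changes ($e_3'=A_1^2e_3$, $B_1=0$, $A_1^3B_2\neq0$), exactly as in the paper. Second, your list of expected $\delta=0$ representatives omits $AD_3^{19}$ (the orbit with $\alpha_4=0$, $\alpha_2=0$, $\alpha_3\neq0$), which is one of the five families in the statement; your case bookkeeping on which coefficients can be normalized must produce it as a separate orbit, since it is not isomorphic to any of $AD_3^{18}(\alpha)$, $AD_3^{20}(\alpha)$, $AD_3^{21}(\alpha)$, $AD_3^{22}(\alpha,\beta)$.
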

\begin{proof}
By considering \eqref{id5} for the triples $\{e_1,e_1,e_3\}$, $\{e_1,e_1,e_2\}$, $\{e_3,e_1,e_1\}$, $\{e_2,e_1,e_1\}$ we get
$e_3\vtr e_3=0, \ \ e_3\vtr e_2=0, \ \ e_3\vtl e_3=0, \ \ e_2\vtr e_3=0$, respectively. By definition \eqref{cdot} one can get $e_3\vtl e_2=0, \ \ e_2\vtr e_3=0$ and $e_3\vtr e_1=-e_3\vtl e_1=e_1\vtl e_3=-e_1\vtr e_3$.

We get the following notations:
\[
\begin{cases}
e_i\vtr e_j=\sum\limits_{k=1}^3\alpha_{i,j}^ke_k, & 1\leq i,j\leq 2,\\
e_1\vtl e_1=e_3-\sum\limits_{k=1}^3\alpha_{1,1}^ke_k,\\
e_i\vtl e_j=-\sum\limits_{k=1}^3\alpha_{i,j}^ke_k, & 1\leq i,j\leq 2,\ (i,j)\neq(1,1),\\
e_3\vtr e_1=e_1\vtl e_3=\sum\limits_{k=1}^3\alpha_{3,1}^ke_k,\\
e_3\vtl e_1=e_1\vtr e_3=-\sum\limits_{k=1}^3\alpha_{3,1}^ke_k,\\
\end{cases}
\]
with considering \eqref{id5}
\[
\begin{split}
(e_1\vtr e_3)\vtr e_3=0 \quad &\Rightarrow \quad \alpha_{3,1}^1=0, \\
(e_1\vtr e_3)\vtr e_2=0 \quad &\Rightarrow \quad \alpha_{3,1}^2\alpha_{2,2}^1=\alpha_{3,1}^2\alpha_{2,2}^2=\alpha_{3,1}^2\alpha_{2,2}^3=0.
\end{split}
\]
Thus we have two cases: $\alpha_{3,1}^2\neq 0$ and $\alpha_{3,1}^2=0$.

\textbf{Case $\alpha_{3,1}^2\neq0$.} Then we get $\alpha_{2,2}^1=\alpha_{2,2}^2=\alpha_{2,2}^3=0$.

We have also
\[
\begin{split}
e_2\vtr (e_2\vtr e_1)=0 \quad &\Rightarrow \quad \alpha_{2,1}^1=0,\\
(e_1\vtl e_2)\vtl e_2=0 \quad &\Rightarrow \quad \alpha_{1,2}^1=0,\\
e_1\vtr (e_3\vtr e_1)=0 \quad \text{and} \quad (e_1\vtl e_3)\vtl e_1=0 \quad &\Rightarrow \quad e_1\vtr e_2=-e_2\vtr e_1.
\end{split}
\]
Now consider $e_1\vtr (e_1\vtr e_1)=-e_3\vtr e_1$, which derives
\[
\begin{split}
\sum_{k=1}^3\alpha_{1,1}^k(e_1\vtr e_k)&=-e_3\vtr e_1.
\end{split}
\]
Since there is not $e_1$ in the right hand side, we obtain $\alpha_{1,1}^1=0$.

Consider the \eqref{id4} and \eqref{id6} for the triple $\{e_1,e_1,e_1\}$, we have $e_3\vtl e_1=0.$ This implies $\alpha_{3,1}^2=0$ which is contradiction. Hence, there is not such case.

\textbf{Case $\alpha_{3,1}^2=0$}. We can obtain
\[
e_1\vtr (e_3\vtr e_1)=0 \qquad \Rightarrow \qquad \alpha_{3,1}^3=0.
\]
This derives $e_3\vtr e_1=0$ and as a result, by $e_3e_i=e_3\vtr e_i+e_3\vtl e_i=0$ and $e_ie_3=e_i\vtr e_3+e_i\vtl e_3=0$ this implies $e_3\vtl e_i=0$ and $e_i\vtr e_3=0$. Then it is easy to see that $\langle e_3\rangle$ is the center of the algebras $(As_3^4, \cdot)$ and
 $(As_3^4, \vtr, \vtl)$.

According to Proposition \ref{compatible} we can write
\[
\begin{cases}
e_1\vtr e_1=\delta e_2+\alpha_1e_3, \ e_1\vtr e_2=\alpha_2e_3, \ e_2\vtr e_1=\alpha_3e_3, \ e_2\vtr e_2=\alpha_4e_3,\\
e_1\vtl e_1=-\delta e_2+(1-\alpha_1)e_3, \
e_1\vtl e_2=-\alpha_2e_3, \
e_2\vtl e_1=-\alpha_3e_3, \
e_2\vtl e_2=-\alpha_4e_3,
\end{cases}
\]
where $\delta\in\{0,1\}.$
We note that hereafter we use the notation $(\alpha, \beta, \gamma, \lambda)$ for the algebra similar to the above algebra
since we work only with parameters.

\textbf{Case $\delta=0$.} Let us consider the general change of the generators of basis:
\[e'_1=A_1e_1+A_2e_2+A_3e_3, \quad e'_2=B_1e_1+B_2e_2+B_3e_3,\]
and from the products $e'_1e'_1=e'_3,\ e'_2e'_2=0$ we put $e'_3=A_1^2e_3, \ B_1=0,$ where $A_1^3B_2\neq0.$

We express the new basic elements $\{e'_1, e'_2, e'_3\}$  via the basic elements $\{e_1, e_2, e_3\}.$  By verifying all the multiplications of the algebra in the new basis we obtain the relations between the parameters $\{\alpha'_{1}, \alpha'_{2}, \alpha'_{3}, \alpha'_{4}\}$ and $\{\alpha_{1}, \alpha_{2}, \alpha_{3}, \alpha_{4}\}$:
$$\alpha'_1=\frac{A_1^2\alpha_{1}+A_1A_2(\alpha_2+\alpha_3)+A_2^2\alpha_4}{A_1^2},\ \alpha'_2=\frac{B_2(A_1\alpha_{2}+A_2\alpha_{4})}{A_1^2},\ \alpha'_3=\frac{B_2(A_1\alpha_{3}+A_2\alpha_{4})}{A_1^2},\ \alpha'_4=\frac{B_2^2\alpha_{4}}{A_1^2}.$$

Then we have the following cases.

\begin{enumerate}
    \item Let $\alpha_4=0$. Then we have
$$
\alpha'_1=\frac{A_1\alpha_1+A_2(\alpha_{2}+\alpha_{3})}{A_1}, \ \alpha'_2=\frac{B_2}{A_1}\alpha_2, \ \alpha'_3=\frac{B_2}{A_1}\alpha_3, \ \alpha'_4=0.$$

\begin{enumerate}
 \item If $\alpha_2=\alpha_3=0$. Then we have $
\alpha'_1=\alpha_1, \ \alpha'_2=\alpha'_3=\alpha'_4=0.$ Hence, we get the algebra $AD_3^{18}(\alpha)$.

 \item If $\alpha_2=0$ and $\alpha_3\neq0$. Then by choosing $B_2=\frac{A_1}{\alpha_3}, \ A_2=\frac{A_1\alpha_1}{\alpha_3}$ we obtain the algebra $AD_3^{19}$.

 \item If $\alpha_2\neq0$ and $\alpha_2+\alpha_3=0$. Then by putting $A_1=B_2\alpha_2$ we get the algebra $AD_3^{20}(\alpha)$.

\item If $\alpha_2\neq0$ and $\alpha_2+\alpha_3\neq0$. Then by choosing $A_2=-\frac{A_1\alpha_1}{\alpha_2+\alpha_3}$ we derive $\alpha'_1=0$. Furthermore by choosing suitable value of $B_2$ one can derive the algebra $AD_3^{21}(\alpha)$ where  $\alpha\neq-1$.
\end{enumerate}

\item Let $\alpha_4\neq0$. Then by choosing $B_2=\frac{A_1}{\sqrt{\alpha_4}}, \ A_2=-\frac{A_1\alpha_2}{\alpha_4}$ we derive the algebra $AD_3^{22}(\alpha,\beta)$.

\end{enumerate}
\textbf{Case $\delta=1$.} We can write
\[
\begin{cases}
e_1\vtr e_1=e_2+\alpha_1e_3, \ e_1\vtr e_2=\alpha_2e_3, \  e_2\vtr e_1=\alpha_3e_3, \  e_2\vtr e_2=\alpha_4e_3,\\
e_1\vtl e_1=-e_2+(1-\alpha_1)e_3, \
e_1\vtl e_2=-\alpha_2e_3, \
e_2\vtl e_1=-\alpha_3e_3, \
e_2\vtl e_2=-\alpha_4e_3.
\end{cases}
\]

Using \eqref{id2} for the triples $\{e_1,e_1,e_1\}$, $\{e_2,e_1,e_1\}$ and \eqref{id6} for the triple $\{e_1,e_1,e_1\}$ we obtain $\alpha_2=\alpha_3=\alpha_4=0$. Now we consider the basis change $e'_2=e_2+\alpha_1e_3$. Then we obtain
$AD_3^{23}.$

\end{proof}

\begin{center}
    \textbf{Acknowledgments}
\end{center}

The work is supported by FCT UIDB/MAT/00212/2020,  UIDP/MAT/00212/2020 and by grant F-FA-2021-423, Ministry of Higher Education, Science and Innovations of the Republic of Uzbekistan.

\end{document}